\documentclass[reqno]{amsart}
\usepackage{amsmath}
\usepackage{amssymb, amsthm, amsfonts, tikz-cd, mathrsfs,mathtools,stmaryrd,enumitem, algorithmic,float,comment,tikz,hyperref}
\usepackage[toc]{appendix}
\usetikzlibrary{backgrounds}
\usetikzlibrary{decorations.pathreplacing}
\usepackage{fullpage}
\usepackage{xcolor}

\usepackage{tikz}
\usetikzlibrary{calc,decorations.pathreplacing}

\hypersetup{
  colorlinks   = true, 
  urlcolor     = blue, 
  linkcolor    = blue, 
  citecolor   = red 
}

\usepackage{comment}

\usepackage[capitalize,nameinlink]{cleveref}
\crefname{theorem}{Theorem}{Theorems}
\crefname{lemma}{Lemma}{Lemmas}
\crefname{claim}{Claim}{Claims}
\crefname{prop}{Proposition}{Propositions}
\crefname{figure}{Figure}{Figures}

\newtheorem{theorem}{Theorem}

\newtheorem{lemma}[theorem]{Lemma}
\newtheorem{claim}[theorem]{Claim}
\newtheorem{corollary}[theorem]{Corollary}

\newtheorem{conj}[theorem]{Conjecture}

\newtheorem{prop}[theorem]{Proposition}

\newtheorem*{claim*}{Claim}

\theoremstyle{remark}
\newtheorem*{remark*}{Remark}

\theoremstyle{definition}
\newtheorem{definition}[theorem]{Definition}

\numberwithin{theorem}{section}

\renewcommand{\phi}{\varphi}

\renewcommand{\leq}{\le}
\renewcommand{\geq}{\ge}

\newcommand{\cF}{\mathcal F}
\newcommand{\cE}{\mathcal E}

\newcommand{\cA}{\mathcal A}
\newcommand{\cB}{\mathcal B}

\newcommand{\cS}{\mathcal S}
\newcommand{\cP}{\mathcal P}

\newcommand{\cU}{\mathcal{U}}

\def\1{\mathbbm{1}}

\def\mc{\mathcal }

\def\eqdef{\overset{\rm{def}}{=}}

\renewcommand{\le}{\leqslant}
\renewcommand{\ge}{\geqslant}

\newcommand{\lpr}[1]{\left(#1\right)}

\newcommand{\abs}[1]{\left\lvert#1\right\rvert}

\newcommand{\lmid}{\,\middle|\,}

\newcommand{\VC}{\textup{VC}}

\usepackage[
backend=biber,
style=numeric,
maxnames=99,
giveninits=true
]{biblatex}
\renewbibmacro{in:}{%
  \ifentrytype{article}{}{\printtext{\bibstring{in}\intitlepunct}}}

\title{Uniform Set Systems with Uniform Witnesses}
\makeatletter
\newcommand\thankssymb[1]{\textsuperscript{\@fnsymbol{#1}}}
\makeatother

\author[Ting-Wei Chao]{Ting-Wei Chao\thankssymb{1}}
\author[Zixuan Xu]{Zixuan Xu\thankssymb{1}}
\author[Dmitrii Zakharov]{Dmitrii Zakharov\thankssymb{1}}

\thanks{\thankssymb{1}Department of Mathematics, Massachusetts Institute of Technology, Cambridge, MA, USA. Email: {\tt \{twchao, zixuanxu,zakhdm\}@mit.edu}}

\begin{document}

\begin{abstract}
Frankl and Pach \cite{FP84} and Erd\H{o}s \cite{Erd84} conjectured that any $(d+1)$-uniform set family $\mathcal{F}\subseteq \binom{[n]}{d+1}$ with VC-dimension at most $d$ has size at most $\binom{n-1}{d}$ when $n$ is sufficiently large. Ahlswede and Khachatrian \cite{AK97} showed that the conjecture is false by giving a counterexample of size $\binom{n-1}{d}+\binom{n-4}{d-2}$. 

For a set family $\mathcal{F}\subseteq \binom{[n]}{d+1}$, the condition that its VC-dimension is at most $d$ can be reformulated as follows: for any $F\in\mathcal{F}$, there exists a set $B_F\subseteq F$ such that $F\cap F'\neq B_F$ for all $F'\in\mathcal{F}$. In this direction, the first author, Xu, Yip, and Zhang \cite{CXYZ25} conjectured that the bound $\binom{n-1}{d}$ holds if we further assume that $|B_F|=s$ for every $F\in \cF$ and for some fixed $0\leq s\leq d$.

The case $s=0$ is exactly the Erd\H{o}s--Ko--Rado theorem, and the cases $s\in \{1,d\}$ were proved in the paper by the first author, Xu, Yip, and Zhang~\cite{CXYZ25}. In this short note, we show that the conjecture holds when $s\leq d/2$, and the maximal constructions are stars. Moreover, we construct non-star set families of size $\binom{n-1}{d}$ satisfying the condition for $d/2<s\leq d-1$, which suggests that the problem is substantially different in these cases.
\end{abstract}

\maketitle

\vspace{-1.5em}

\section{Introduction}
Let $\cF\subseteq \binom{[n]}{d+1}$ be a $(d+1)$-uniform set system over the ground set $[n]\eqdef \{1,\dots, n\}$. The \emph{Vapnik-Chervonenkis dimension (VC-dimension)} of $\cF$, denoted as $\VC(\cF)$ is defined to be the size of the largest set $S\subseteq [n]$ such that every subset of $S$ is realized as an intersection $S\cap F$ for some $F\in \cF$, i.e. for all $S'\subseteq S$, there exists $F\in \cF$ such that $S' = S\cap F$.
Since every set family $\cF\subseteq \binom{[n]}{d+1}$ has VC-dimension at most $d+1$, it is natural to ask for the maximum size of $\cF$ with VC-dimension at most $d$. Frankl and Pach \cite{FP84} proved that any such set family has size at most $\binom{n}{d}$. Then Frankl and Pach \cite{FP84} and Erd\H{o}s \cite{Erd84} conjectured that the upper bound can be improved to $\binom{n-1}{d}$ when $n$ is large enough. However, Ahlswede and Khachatrian constructed a counterexample of size $\binom{n-1}{d}+\binom{n-4}{d-2}$, and this was later generalized by Mubayi and Zhao \cite{MZ07} where they constructed a class of constructions attaining the same size. The upper bound was recently improved to $\binom{n-1}{d}+O_d(n^{d-2})$ by Yang and Yu \cite{YY25}, matching the second order term of the family constructed in \cite{MZ07} up to a multiplicative constant. For more works along this line, see \cite{CXYZ25, GXYZZ26,WXZ25}.

Notice that in the case when $\cF$ is a $(d+1)$-uniform family, we can equivalently formulate the condition that $\VC(\cF)\le d$ as follows: for each $F\in \cF$, there exists a subset $B_F\subseteq F$ such that for any $F'\in \cF$, we have $F\cap F'\neq B_F$. We call $B_F$ a \emph{witness} of $F$ in the sense that it cannot be realized as an intersection of $F$ with any set $F'\in\cF$. In this note, we are interested in the case where all witness sets $B_F$ have the same size. 
\begin{definition}[$s$-witness family]
    Let $n\ge d+1$ and $0\le s\le d$. A family $\cF\subseteq \binom{[n]}{d+1}$ is an \emph{$s$-witness family} if it satisfies the following condition: for every $F\in \cF$, there exists $B_F\subseteq F$ of size $s$ such that $F\cap F'\neq B_F$ for every $F'\in \cF$.
\end{definition}

A $0$-witness family $\cF$ is exactly an intersecting family since we have $F\cap F'\ne \varnothing$ for all $F,F'\in \cF$. The well-known Erd\H{o}s--Ko--Rado Theorem states the following upper bound for intersecting families.

\begin{theorem}[Erd\H{o}s--Ko--Rado~\cite{EKR61}]\label{thm:erdos-ko-rado}
    Let $d\ge 0$ and $n\ge 2(d+1)$. If $\cF\subseteq \binom{[n]}{d+1}$ is an intersecting family, then $|\cF|\le \binom{n-1}{d}$. Moreover, when $n > 2(d+1)$, equality holds if and only if $\cF$ is a star.
\end{theorem}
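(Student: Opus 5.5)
We prove the Erd\H{o}s--Ko--Rado bound by Katona's cyclic permutation method, and then analyze the equality case separately. Write $k=d+1$, so $n\ge 2k$.

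\textbf{The bound.} Fix a cyclic ordering $\sigma$ of $[n]$. Call a $k$-set an \emph{arc} of $\sigma$ if it consists of $k$ cyclically consecutive elements; there are $n$ arcs.

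The key lemma is that an intersecting family contains at most $k$ arcs of $\sigma$. To see this, suppose the arc $A=\{a_1,\dots,a_k\}$, taken in cyclic order, lies in $\cF$. Every other arc meeting $A$ either starts at some $a_i$ with $2\le i\le k$, or ends at some $a_i$ with $1\le i\le k-1$. Pair these $2(k-1)$ arcs as follows: the arc ending at $a_{i-1}$ is paired with the arc starting at $a_i$. The two arcs in each pair are disjoint, because $n\ge 2k$. Hence $\cF$ contains at most one arc from each of the $k-1$ pairs, and together with $A$ itself this gives at most $k$ arcs.

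Now double count pairs $(\sigma,F)$ with $F\in\cF$ an arc of $\sigma$. There are $(n-1)!$ cyclic orderings, so the count is at most $k(n-1)!$. On the other hand, each fixed $k$-set is an arc in exactly $k!(n-k)!$ cyclic orderings. Therefore
\[
|\cF|\,k!\,(n-k)!\le k\,(n-1)!,
\]
which gives $|\cF|\le \binom{n-1}{k-1}=\binom{n-1}{d}$.

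\textbf{Equality for $n>2k$.} If equality holds, then every cyclic ordering has exactly $k$ arcs in $\cF$.

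First we show that for $n>2k$ these $k$ arcs share a common element, so they are exactly the $k$ arcs through one point. Take the leftmost and rightmost arcs in $\cF$ relative to $A$ and use the pairing argument above. The strict inequality $n>2k$ is what makes an arc ending at $a_{i-1}$ and one starting at $a_j$ disjoint whenever $j>i$. Consequently, the chosen arcs form a "consecutive window" of starting points, and any $k$ consecutive-window arcs share an element.

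Next we pass from arcs to the whole family, which we expect to be the main obstacle. The cleanest route is the standard shifting step combined with the Hilton--Milner theorem, but we avoid quoting Hilton--Milner. Instead we argue directly on orderings:
\begin{itemize}
\item Fix $\sigma$, and let $x$ be the common element of its $k$ arcs in $\cF$.
\item Take any $F\in\cF$. We modify $\sigma$ by transpositions of adjacent elements to reach an ordering $\sigma'$ in which $F$ is an arc, while keeping track of the common element.
\item A single adjacent swap changes only two arcs, so the common element of the arcs in $\cF$ cannot jump. For $n>2k$, the window structure forces it to persist.
\end{itemize}
It follows that $x\in F$ for every $F\in\cF$. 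Thus $\cF$ is contained in the star at $x$, and since it has size $\binom{n-1}{d}$, it equals that star.

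The delicate point is exactly this persistence claim. We would verify it by examining how a swap of adjacent elements $u,v$ affects the window of $k$ consecutive arcs, checking that the window's intersection element remains fixed whenever $n\ge 2k+1$.
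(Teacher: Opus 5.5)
The paper gives no proof of this statement; it simply quotes the classical Erd\H{o}s--Ko--Rado theorem. Your Katona-style proof of the bound $|\cF|\le\binom{n-1}{d}$ is correct. The equality case, however, has one false intermediate claim and one genuine gap.

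\textbf{The false intermediate claim.} It is not true that, for $n>2k$, the arc ending at $a_{i-1}$ and the arc starting at $a_j$ are disjoint for every $j>i$. Together they span $2k+j-i$ consecutive positions, so disjointness needs $n\ge 2k+j-i$; for instance, with $k=4$, $n=9$, $i=2$, $j=4$ they share an element. Only the case $j=i+1$ is guaranteed by $n\ge 2k+1$, but that case is all you need. If the arc starting at $a_{i+1}$ is in $\cF$, then the arc ending at $a_{i-1}$ is not. So its partner, the arc starting at $a_i$, is in $\cF$. Hence the arcs starting at $a_2,\dots,a_k$ that lie in $\cF$ are exactly those starting at $a_2,\dots,a_r$ for some $r$, and all $k$ arcs in $\cF$ contain $a_r$.

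\textbf{The genuine gap.} The persistence claim carries the whole equality case, but it is only asserted. Moreover, the one-swap-at-a-time check you propose cannot establish it for arbitrary adjacent swaps. Let the window consist of the arcs with starting positions $W=\{m-k+1,\dots,m\}$, with $x$ at position $m$, and swap $x$ with its right neighbour $y$. The only arcs that change are the one ending at $m$ (it loses $x$ and gains $y$) and the one starting at $m+1$ (it loses $y$ and gains $x$). Nothing about these two orderings prevents the new arc ending at $m$ from lying in $\cF$ while the new arc starting at $m+1$ does not. In that case the new window is the same block $W$ of positions, and its common element is now $y$. So locally the common element can jump, and excluding this would require global information you have not supplied.

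\textbf{The fix: never move $x$.} Deleting $x$'s position from the cycle leaves a path, and adjacent transpositions along a path realize every rearrangement of $[n]\setminus\{x\}$. So any $F\not\ni x$ can be made the arc on positions $m+1,\dots,m+k$ using only swaps at positions $p,p+1\neq m$. For such a swap, the new window $W''$ satisfies $W\triangle W''\subseteq\{p-k+1,p+1\}$, since these are the only arcs that change as sets. Both $W$ and $W''$ are blocks of $k$ consecutive starting positions and $n>2k$. A short check then shows that $W''\neq W$ would force $\{p-k+1,p+1\}$ to equal $\{m-k+1,m+1\}$ or $\{m-k,m\}$, i.e.\ $p\in\{m-1,m\}$, which is excluded. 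Hence $W''=W$ throughout, and position $m$ keeps holding $x$. At the end, $F$ is an arc in $\cF$ that misses $x$, which is a contradiction.

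With these two repairs your argument becomes a complete proof of the theorem, including the equality case.
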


As a generalization of the Erd\H{o}s--Ko--Rado Theorem, the first author, Xu, Yip, and Zhang made the following conjecture in \cite{CXYZ25}.

\begin{conj}[\cite{CXYZ25}]\label{conj:uniform-witness}
    Let $n\ge 2(d+1)$ and $0\le s\le d$. Suppose $\cF\subseteq \binom{[n]}{d+1}$ is an $s$-witness family. Then $|\cF|\le \binom{n-1}{d}$.
\end{conj}

It is clear that when $s = 0$, \cref{conj:uniform-witness} follows from \cref{thm:erdos-ko-rado}. In \cite{CXYZ25}, the authors confirmed \cref{conj:uniform-witness} in the case where $s \in \{1,d\}$ and $n$ is sufficiently large. Moreover, they showed that equality holds if and only if $\cF$ is a star when $s\in \{1,d\}$ and remarked that it might be plausible to conjecture that the maximum set families achieving the bound in \cref{conj:uniform-witness} must be stars for all $0\le s\le d$.

In this note, we prove \cref{conj:uniform-witness} in the case when $s \le d/2$ and $n$ is sufficiently large.
\begin{theorem}\label{thm:d>2s}
    Let $1\le s\le d/2$ and let $n$ be sufficiently large. Suppose $\cF\subseteq \binom{[n]}{d+1}$ is an $s$-witness family. Then $|\cF|\le \binom{n-1}{d}$. Moreover, if $|\cF| = \binom{n-1}{d}$, then $\cF$ is a star.
\end{theorem}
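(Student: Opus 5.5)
We will prove \cref{thm:d>2s} with a stability argument: first show that an $s$-witness family of size close to $\binom{n-1}{d}$ must have almost all of its sets through one vertex, and then show that no set missing that vertex can be present.

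\textbf{Step 1: reduction to a structured subfamily.} For each $F\in\cF$ fix a witness $B_F$ with $|B_F|=s$ and put $A_F=F\setminus B_F$, so $|A_F|=d+1-s\ge s+1$. Since $|B_F|\le d/2$, we have $|A_F|>|B_F|$. Group the sets by their witness $B$ and let $\cF_B=\{F:B_F=B\}$. Within $\cF_B$ the sets $A_F$ form a $(d+1-s)$-uniform family on $[n]\setminus B$. The condition $F\cap F'\ne B$ for $F'\in\cF_B$ says exactly that the sets $A_F$ pairwise intersect. So each $\{A_F:F\in\cF_B\}$ is intersecting, and by \cref{thm:erdos-ko-rado} it has size at most $\binom{n-s-1}{d-s}$. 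Summing naively over all $B$ gives a bound of order $n^{d}\cdot\binom{n}{s}/n^{s}$, which is far too weak. We also have to use the conditions between sets with different witnesses.

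\textbf{Step 2: a delta-system / kernel argument.} Here we use $s\le d/2$. We apply the Frankl / Füredi kernel method, choosing a $k$ sufficiently large in terms of $d$. For every $F$ we look for a \emph{kernel} $K\subsetneq F$ such that $F$ is the center-extension of a $k$-sunflower in $\cF$ with kernel $K$; all but $O(n^{d-1})$ sets of $\cF$ admit such a kernel. Now suppose some $F'\in\cF$ has a kernel $K'$ with $K'\cap F=B_F$. Then taking the petals of the sunflower at $K'$ that avoid $F$ (possible since $k$ is large), we get a set $F''$ with $F''\cap F=B_F$, a contradiction. Counting which kernel sizes and kernel/witness configurations are allowed, the $s$-witness condition forbids kernels $K$ with $B\subseteq K$ and $K\cap A=\varnothing$ in a suitable sense. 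We then aim to show that the family of kernels of size $d$ (the only ones that contribute order $n^d$) is an intersecting $d$-uniform family. More precisely, two disjoint $d$-kernels $K_1,K_2$ would let us complete $K_2$ to a set meeting $F\supset K_1$ in exactly a prescribed $s$-set. The inequality $|A_F|>|B_F|$ is what guarantees the relevant $s$-subset of $F$ can be arranged to be $B_F$. Applying \cref{thm:erdos-ko-rado} to the $d$-kernels, together with the Hilton--Milner stability version, gives $|\cF|\le\binom{n-1}{d}+O(n^{d-1})$. If the $d$-kernels are not a star, the bound drops to $c\binom{n-1}{d}$ for some $c<1$.

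\textbf{Step 3: exactness.} Let $x$ be the center of the star of $d$-kernels. Then all but $O(n^{d-1})$ sets of $\cF$ contain $x$. Take any $G\in\cF$ with $x\notin G$. Count the sets $F\ni x$ whose $d$-kernel $F\setminus\{x\}$ is disjoint from $G$ except in an $s$-set. Because the kernels are ``free'' (they extend to large sunflowers), some such $F$ would satisfy $F\cap G=B_G$; this uses that we can choose $F\cap G$ to be any $s$-subset of $G$ avoiding $x$. So each such $G$ kills $\Omega(n^{d-1})$ potential sets through $x$, while contributing only one set. This gives $|\cF|<\binom{n-1}{d}$ unless $\cF$ is a star.

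\textbf{Main obstacle.} The hardest step is Step 2. We must show that the kernel family really inherits an intersecting (or star-like) property from the witness condition, uniformly over all kernel sizes between $s$ and $d$. The difficulty is that kernels of intermediate size could in principle interact with witnesses of other sets. We would first try an induction on the kernel size, using $d+1-s>s$ to always find petals that realize the forbidden intersection $B_F$.
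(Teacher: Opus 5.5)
Your proposal has a genuine gap: Step 2, which carries the whole argument, does not work, and Step 3's counting is wrong for $s\ge 2$.

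\textbf{Step 2.} The claim that the sunflower $d$-kernels form an intersecting family is false, and your mechanism cannot produce a contradiction. If $K_1\cap K_2=\varnothing$, any completion $K_2\cup\{z\}$ meets $F=K_1\cup\{y\}$ in at most two points. So it can never realise a prescribed $s$-set once $s\ge 3$. For instance, take $d\ge 6$, $s=3$, and disjoint $d$-sets $K_1,K_2$. Then $\{K_1\cup\{y\}: y\notin K_1\}\cup\{K_2\cup\{z\}: z\notin K_2\}$ is a $3$-witness family, since intersections have size $\ge d$ inside a group and $\le 2$ across groups. Yet $K_1,K_2$ are disjoint kernels of sunflowers with $n-d$ petals. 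Your argument is purely local, so restricting to large $\cF$ does not rescue it.

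Even granting an intersecting property, EKR on $d$-kernels does not bound $|\cF|$ by itself. In the star at $x_0$, the $d$-kernels are just the $\binom{n-1}{d-1}$ $d$-sets through $x_0$. You would still need a double count showing almost every $F$ contains $d$ such kernels. You would also need control of sets whose kernels have intermediate size, which is exactly the ``main obstacle'' you leave open. In Step 3 you switch to calling $F\setminus\{x\}$ the $d$-kernel; in a star this is not the kernel of any sunflower with two or more petals, so the notion is not used consistently.

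\textbf{Step 3.} The witness condition for $G$ forbids only the single intersection $B_G$. So $G$ excludes only the star sets $\{x\}\cup B_G\cup R$ with $R\cap(G\cup\{x\})=\varnothing$, i.e.\ $\Theta(n^{d-s})$ sets, not $\Omega(n^{d-1})$ when $s\ge 2$. You cannot ``choose $F\cap G$ to be any $s$-subset of $G$''. Worse, all $G$ sharing a witness $B$ exclude essentially the same star sets, and there can be $\Theta(n^{d-s})$ of them (all sets $\{y\}\cup B\cup R'$). A per-$G$ count therefore gives at best a one-for-one exchange, never a strict loss.

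\textbf{What the paper uses instead.} Its proof supplies three ingredients you are missing:
\begin{enumerate}
\item The sunflower reduction of $\{F\setminus B: B_F=B\}$ to a bounded intersecting family $\cA_B$ (\cref{lem:modeling}). This isolates the witnesses with $\cA_B=\{\{x_B\}\}$; all others contribute only lower-order terms.
\item The injection $F\mapsto F\setminus\{x_B\}$ into $\binom{[n]}{d}$. This makes the exchange between sets off the star and sets on it exactly one-for-one.
\item The extra $d$-sets containing $B\cup B'$ and avoiding $x_B,x_{B'}$, which lie outside the image. They are nonempty precisely because $2s\le d$, which is where $s\le d/2$ really enters, not through $|A_F|>|B_F|$. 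They give a strict gain of order $n^{d-s}|\cB_1'|$ against an overlap of only $O(n^{d-s-1}|\cB_1'|)$.
\end{enumerate}
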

Together with \cref{thm:erdos-ko-rado} and the results in \cite{CXYZ25}, the remaining open cases of \cref{conj:uniform-witness} are when $d/2 < s\le d-1$. In these remaining open cases, we give a construction that is not a single star achieving the size $\binom{n-1}{d}$, illustrating the barriers to adapting current methods as all the proofs of the known cases show that $\cF$ must be a single star if equality is achieved.

\begin{theorem}\label{thm:two-star}
    For any $n\ge s+3$ and $d/2< s \leq d-1$, there exists an $s$-witness family $\cF\subseteq \binom{[n]}{d+1}$ of size $\binom{n-1}{d}$ that is not a star.
\end{theorem}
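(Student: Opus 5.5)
My plan is an explicit "swap" construction: start from the star at $1$ and exchange a subfamily of it for an equally large family of sets avoiding $1$, so that the size $\binom{n-1}{d}$ is preserved while the common element is destroyed. The first candidate I would try is
\[
\cF=\{F: 1\in F,\ 2\notin F\}\ \cup\ \{F:\{1,2,3\}\subseteq F\}\ \cup\ \{G:\{2,3\}\subseteq G,\ 1\notin G\}.
\]
It is obtained from the star at $1$ by deleting the sets containing $1,2$ but not $3$ and adding the sets containing $2,3$ but not $1$. Both of these subfamilies have size $\binom{n-3}{d-1}$, so $|\cF|=\binom{n-1}{d}$. No element lies in every member: $1$ is missed by the added sets, $2$ by the first part, and $3$ by any set containing $1$ and avoiding $2,3$. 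Hence $\cF$ is not a star.

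The second step is to assign witnesses $B_F$ of size $s$ type by type. For $F\ni 1$ containing $3$, I take $B_F\subseteq F\setminus\{1,3\}$; this is possible since $s\le d-1$. Every member containing $1$ meets $F$ in a set containing $1$, and every member avoiding $1$ contains $3$, so no intersection equals $B_F$. For $G\ni 2,3$ with $1\notin G$, the symmetric choice $B_G\subseteq G\setminus\{2,3\}$ works. Indeed, members avoiding $2$ contain $1$ and therefore meet $G$ in a set containing $3$ unless they avoid $3$, and members containing $2$ contribute $2$ to the intersection. This needs a little more care, and I may have to adjust which of $2,3$ is placed in the witness.

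The main obstacle is the sets $F$ with $1\in F$ and $2,3\notin F$. If $1\notin B_F$, a suitable $G\supseteq\{2,3\}\cup B_F$ realizes $B_F$. If $1\in B_F$, a set in the first part containing $B_F$ and avoiding $F\setminus B_F$ realizes it once $n$ is large. So the naive swap fails, and this is exactly where the hypothesis $s>d/2$ must be used. My fix is to make the swap depend on a large fixed core rather than on the single element $3$. Fix a set $A$ with $|A|$ about $d+1-s$, and let the exchanged subfamilies be the sets containing $1,2$ and avoiding $A$, respectively containing $2$ and all of $A$ and avoiding $1$, with sizes kept equal by the same binomial identity. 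The aim is that every member of the second type meets every member of the first type in a set of size forced to differ from $s$, or containing an element of $A$ that can be excluded from the witness. The inequality $s>d/2$, equivalently $d+1-s\le s$, is what should let a witness of size $s$ avoid the unavoidable part of the intersections while still fitting inside $F$.

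I expect the case analysis for these "generic" members of the first part to be the hard step, and I will tune the size of the core to make it go through. The size count and the non-star property are routine. The condition $n\ge s+3$ is only needed to make all the sets involved nonempty and well defined.
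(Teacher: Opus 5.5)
\textbf{The size count for your repaired family fails.} Deleting $\{F\supseteq\{1,2\}: F\cap A=\varnothing\}$ removes $\binom{n-2-|A|}{d-1}$ sets, but adding $\{G:\{2\}\cup A\subseteq G,\ 1\notin G\}$ adds only $\binom{n-2-|A|}{d-|A|}$. These agree only when $|A|=1$ (your failed naive swap) or $n=2d+1$, so the ``same binomial identity'' is not available.

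You also cannot avoid $|A|\ge 2$. Take $F\ni 1$ with $2\notin F$ and $F\cap A=\varnothing$. Your own argument forces $1\notin B_F$. Then an added set $G\supseteq\{2\}\cup A\cup B_F$ avoiding $F\setminus B_F$ realizes $B_F$ unless $1+|A|+s>d+1$, i.e.\ $|A|\ge d+1-s\ge 2$. Hence for every $n>2d+1$ your family has strictly fewer than $\binom{n-1}{d}$ members: you delete $\Theta(n^{d-1})$ sets and add $O(n^{d-2})$. No tuning of the core repairs this.

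There is also a structural reason. With $|A|=d+1-s$ your family really is an $s$-witness family for every $1\le s\le d-1$, with these witnesses:
\begin{itemize}
\item $B_G=G\setminus A$ for the added sets;
\item $B_F\subseteq F\setminus\{1,2\}$ for sets containing $1,2$;
\item for $F\ni 1$, $2\notin F$: $B_F\subseteq F\setminus\{1,x\}$ with $x\in F\cap A$, or any $B_F\subseteq F\setminus\{1\}$ if $F\cap A=\varnothing$.
\end{itemize}
This never uses $s>d/2$, and the family is not a star. So it cannot have size $\binom{n-1}{d}$ without contradicting \cref{thm:d>2s} for $s\le d/2$.

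\textbf{The missing idea.} Keep \emph{all} sets containing both $1$ and $2$, and do the exchange among sets containing exactly one of them, via the bijection $\{1\}\cup A\leftrightarrow\{2\}\cup A$ for $A\in\binom{[3,n]}{d}$. The paper takes $\{1\}\cup A$ when $|A\cap[3,m]|\ge s$ and $\{2\}\cup A$ otherwise. Then $|\cF|=\binom{n-2}{d}+\binom{n-2}{d-1}=\binom{n-1}{d}$ automatically. The witnesses are as follows:
\begin{itemize}
\item Sets containing $1,2$ use any $s$-subset of $A$.
\item For $\{1\}\cup A$, the only dangerous partners are sets $\{2\}\cup A'$. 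These cannot contain $B_F\subseteq A\cap[3,m]$, since $|A'\cap[3,m]|<s$.
\item For $\{2\}\cup A$, the witness contains at least $d-s+1$ elements of $[m+1,n]$. No partner $\{1\}\cup A'$ can contain these, since $|A'\cap[m+1,n]|\le d-s$.
\end{itemize}
Fitting $d-s+1$ such elements into an $s$-set is exactly where $s>d/2$ is used.

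Finally, the bound on $n$ is not mere well-definedness. Non-star-ness needs some $\{2\}\cup A$ present, i.e.\ $m\le n-(d-s+1)$. Together with $m\ge s+2$ this forces $n\ge d+3$. Indeed, for $n=d+2$ any $d+1$ members of $\binom{[n]}{d+1}$ share a point, so the hypothesis $n\ge s+3$ as stated is slightly too weak.
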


We will prove \cref{thm:two-star} by giving an explicit construction that is contained in the union of two star families but not contained in a single star family. Recall that the \emph{piercing number} $\tau(\cF)$ is the cardinality of the smallest set $T$ such that $F\cap T\neq\varnothing$ for all $F\in \mc F$. In particular, the construction given in the proof of \cref{thm:two-star} has piercing number $2$. Furthermore, we can generalize the construction in \cref{thm:two-star} and get $s$-witness families of size $\binom{n-1}{d}$ with arbitrary large piercing number.

\begin{theorem}\label{thm:more-star}
    Let $d, s, k \ge 1$ be integers such that $d/2<s\leq d-k+1$ and $n \ge \max( k+(k-1)s, d+2k-1)$. There exists an $s$-witness family $\mc F \subseteq {[n] \choose d+1}$ of size ${n-1\choose d}$ such that $\tau(\mc F) = k$.
\end{theorem}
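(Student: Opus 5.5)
We will prove \cref{thm:more-star} with an explicit ``shifted star'' construction: start from the full star at a point and move some of its members to other centres without changing the total count. For $k=2$ (which is \cref{thm:two-star}) the construction is as follows. Fix distinct points $x_1,x_2$ and a set $Y\subseteq[n]\setminus\{x_1,x_2\}$ with $|Y|=t:=d-s+1$. Let $\mathcal G$ be the family of $d$-subsets $G\subseteq [n]\setminus\{x_1,x_2\}$ with $G\supseteq Y$. Define
\[
\cF=\{F\ni x_1: x_2\in F\}\cup\{\{x_1\}\cup G: G\notin\mathcal G\}\cup\{\{x_2\}\cup G: G\in \mathcal G\},
\]
where $G$ always ranges over $d$-subsets of $[n]\setminus\{x_1,x_2\}$. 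The map $\{x_1\}\cup G\mapsto\{x_2\}\cup G$ is a bijection from the removed sets to the added ones. Hence $|\cF|=\binom{n-1}{d}$.

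The witnesses are chosen as follows. Sets containing both $x_1$ and $x_2$ meet every member of $\cF$ in $x_1$ or $x_2$, so any $B\subseteq F\setminus\{x_1,x_2\}$ of size $s$ works; here $s\le d-1$ is used.

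For $F=\{x_2\}\cup G$ with $G\in\mathcal G$, take $B_F\subseteq G$ with $B_F\supseteq Y$; this is possible since $t\le s$, which is equivalent to $s>d/2$ up to integrality. The only members of $\cF$ avoiding $x_2$ are $\{x_1\}\cup G'$ with $G'\notin\mathcal G$, i.e.\ $G'\not\supseteq Y$. For such a set, $F\cap F'=G\cap G'=B_F\supseteq Y$ would force $G'\supseteq Y$, a contradiction.

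For $F=\{x_1\}\cup G$ with $G\notin\mathcal G$, the only dangerous $F'$ are $\{x_2\}\cup G'$ with $G'\supseteq Y$, and then $F\cap F'=G\cap G'\supseteq G\cap Y$. There are two cases.
\begin{itemize}
\item If $G\cap Y\neq\varnothing$, choose $B_F\subseteq G$ of size $s$ missing some element of $G\cap Y$. This is possible since $s<d=|G|$.
\item If $G\cap Y=\varnothing$, then any $G'$ with $G\cap G'=B_F$ contains the disjoint union $B_F\cup Y$, of size $s+t=d+1>d$. This is impossible.
\end{itemize}

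Finally, $\tau(\cF)=2$, since $\{x_1,x_2\}$ pierces $\cF$ and no single point does. Indeed, $\cF$ contains sets avoiding $x_1$ and sets avoiding $x_2$, and for any other point $z$ we can exhibit a set of $\cF$ avoiding $z$. The hypothesis $n\ge s+3$ ensures there is room both for a $G\in\mathcal G$ and for a $G\notin\mathcal G$ avoiding $z$.

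For general $k$, take centres $x_1,\dots,x_k$ and pairwise disjoint sets $Y_2,\dots,Y_k$ of size $t=d-s+1$ avoiding all centres. Start from the star at $x_1$. For each $i\ge2$, move $\{x_1\}\cup G$ to $\{x_i\}\cup G$ when $G$ avoids $x_2,\dots,x_k$ and $G\supseteq Y_i$. Disjointness of the $Y_i$ makes these moves unambiguous: $G$ cannot contain two disjoint $Y_i$ once $2t>d$, which again follows from $s\le d-k+1<d$ together with $s>d/2$, up to small cases to be checked. The count is again $\binom{n-1}{d}$.

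The witness verification is essentially the one above, applied pairwise. Sets containing two or more centres meet everything through a centre, provided their non-centre part has at least $s$ elements. Since such a set can contain up to $k$ centres, this requires $d+1-k\ge s$, which is precisely the hypothesis $s\le d-k+1$. For $\{x_i\}\cup G$ we take $B\supseteq Y_i$. For $\{x_1\}\cup G$ we take a $B$ avoiding one point of each nonempty $G\cap Y_i$, and use the size argument $s+t>d$ for the $Y_i$ disjoint from $G$.

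The lower bound $\tau(\cF)\ge k$ is what $n\ge\max(k+(k-1)s,\,d+2k-1)$ should guarantee. Given any $k-1$ points, we must exhibit a member of $\cF$ avoiding all of them, choosing an $x_i$ not among them and building $G$ in the free part of the ground set. This step, together with checking the interactions between different moved classes $\{x_i\}\cup G$ and $\{x_j\}\cup G'$ (which share no centre, so their witnesses $B\supseteq Y_i$ must not be realized by sets with $G'\supseteq Y_j$), is where I expect the main obstacle. If necessary I will shrink the moved classes, e.g.\ by requiring $G\cap Y_j=\varnothing$ for $j\ne i$, so that the containment argument for $Y_i$ still applies. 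The count is unaffected, since the moves remain a bijection onto their image.
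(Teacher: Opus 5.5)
\textbf{The $k=2$ case is fine; the extension to $k\ge 3$ fails.} Your $k=2$ construction is correct. It is the paper's two-star construction from the proof of Theorem~\ref{thm:two-star} with $m=n-(d-s+1)$ and $Y=[m+1,n]$. One small correction: the room needed for $\tau(\cF)=2$ is $n\ge d+3$, which is supplied by $n\ge d+2k-1$, not $n\ge s+3$. For $k\ge 3$ the failure is not where you expect it. Your family keeps every set containing $x_1$ together with another centre. Let $F=\{x_i\}\cup G$ be a moved set and let $B\subseteq F$ be any $s$-set.
\begin{enumerate}
\item If $B\subseteq G$, pick $j\notin\{1,i\}$ and a set $R$ with $|R|=d-s-1$ disjoint from $F\cup\{x_1,x_j\}$. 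Then $F'=\{x_1,x_j\}\cup B\cup R$ is never moved, so $F'\in\cF$, and $F\cap F'=B$.
\item If $x_i\in B$, the set $\{x_1,x_i\}\cup(B\setminus\{x_i\})\cup R$, with $|R|=d-s$ and $R$ disjoint from $F\cup\{x_1\}$, does the same.
\end{enumerate}
So for every $n\ge 2d-s+2$ (for instance $k=3$, $d=5$, $s=3$, $n=20$) the moved sets have no witness at all. The theorem must cover these $n$.

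Shrinking the moved classes cannot fix this, because the obstruction comes from the unmoved multi-centre sets, not from interactions between moved classes. Your specific refinement $G\cap Y_j=\varnothing$ makes things worse. With it, $\{x_1\}\cup B_F\cup R$, where $R$ avoids $G$ and the centres and meets some $Y_j$, is no longer moved, so it stays in $\cF$ and meets $F$ in exactly $B_F$.

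Two supporting claims are also false. First, for $k\ge3$ a set with several centres need not meet everything through a centre: $\{x_1,x_2\}\cup H$ and $\{x_3\}\cup G$ share no centre. Second, $2t>d$ is equivalent to $s<d/2+1$. Together with $s>d/2$ this holds only when $d$ is odd and $s=(d+1)/2$, so in general a $d$-set can contain several $Y_i$ and the moves are ambiguous.

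\textbf{The missing idea} is that the multi-centre layers must be thinned by the same type rule as the single-centre layer. The paper uses markers $S_1,\dots,S_{k-1}\subseteq U=[n]\setminus[k]$ of size $s$, not $d-s+1$. Since $2s>d$, every $A\subseteq U$ with $|A|\le d$ has a well-defined type: the unique $i$ with $S_i\subseteq A$, or $k$ if there is none. The family $\cF$ consists of all $I\cup A$ with $\varnothing\ne I\subseteq[k]$, $A\in\binom{U}{d+1-|I|}$, and the type of $A$ lying in $I$.

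For type $i\le k-1$ the witness is $S_i$. No $I'\cup A'$ with $I\cap I'=\varnothing$ can realise it, because $S_i\subseteq A'$ forces the type of $A'$ to be $i\in I'$. This is exactly what fails in your family, where $\{x_1,x_j\}\cup B\cup R$ is kept regardless of which markers $B\cup R$ contains. For type $k$ the witness is an $s$-subset of $A$ avoiding one chosen point of each nonempty $A\cap S_j$ with $j\in[k-1]\setminus I$; this is where $s\le d-k+1$ is used. Counting by $|I|=t$ gives $\sum_t\binom{k-1}{t-1}\binom{n-k}{d+1-t}=\binom{n-1}{d}$.
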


We remark that our construction gives several pairwise non-isomorphic examples (up to a permutation of the ground set) proving \cref{thm:two-star}, but the construction for \cref{thm:more-star} gives only one such example (up to a permutation of the ground set). Therefore, we will present both the proof of \cref{thm:two-star} and the proof of \cref{thm:more-star} in \cref{sec:construction} for completeness despite the fact that they share many similarities.\footnote{In an earlier arXiv version of this paper, \cref{thm:more-star} was stated as a question in the special case $k=3$. ChatGPT 5.4 Pro managed to successfully answer this question and the construction in \cref{subsec:more_star} is a generalization of its response (see \url{https://chatgpt.com/share/69b8e49c-6df0-8009-8180-52161f034bc2} for the conversation).}

Since our proof of \cref{thm:d>2s} proceeds by approximating $\cF$ by a single star, it does not generalize easily to the cases where $s > d/2$. Thus, we believe new ideas are required to prove the remaining cases of \cref{conj:uniform-witness}.

\subsection{Paper Organization} In \cref{sec:proof} we present the proof of \cref{thm:d>2s}. Then in \cref{sec:construction} we give the construction that proves \cref{thm:two-star,thm:more-star}.

\medskip\noindent \textit{Notations.} For sets $A,B\subseteq [n]$, we denote their symmetric difference as $A\triangle B = (A\setminus B)\cup (B\setminus A)$. We always consider $s$ and $d$ as fixed and $n\to \infty$. We use $O_d(\cdot)$ and $\Omega_d(\cdot)$ to suppress leading constants depending on $d$.

\medskip\noindent{\textbf{Acknowledgments.}}
We thank the organizers of the SLMath workshop ``Algebraic and Analytic Methods in Combinatorics'' (March 2025) attended by the authors where this project was initiated. We would also like to thank Zixiang Xu for comments on an early draft of this note. Finally, we thank Zach Hunter for pointing out typos in the first draft posted on arXiv.

\section{Proof of \cref{thm:d>2s}}\label{sec:proof}

Our proof consists of three steps split into the three subsequent subsections. In the first step presented in \cref{subsec:structural}, we analyze the structure of an $s$-witness set family $\cF\subseteq \binom{[n]}{d+1}$ for $1\le s\le d/2$. In particular, we show in \cref{lem:modeling} that we can capture the intersection properties of $\cF$ using set families of constant size. Then in \cref{lem:injection}, we define a certain injection of a subset of $\cF$ into $\binom{[n]}{d}$ that will be the main object that we analyze later on. Next, in \cref{subsec:star-approx}, we show that any family $\cF$ that is close to achieving the maximum size can be approximated by a star family. Finally, in \cref{subsec:stability}, we perform a stability analysis, showing that in fact any set family achieving size $\binom{n-1}{d}$ must be a single star, concluding the proof of \cref{thm:d>2s}.

From now on, we fix $1\leq s\le d/2$ and let $\cF\subseteq \binom{[n]}{d+1}$ be an $s$-witness family. In particular, for $F\in \cF$, we always fix a choice of its witness $B_F\subseteq F$ of size $s$ so that $F\cap F'\neq B_F$ for any $F'\in \cF$.

Let us recall the Erd\H{o}s-Rado sunflower lemma that will be used in the proof of \cref{lem:modeling}. For $r\ge 2$, a family of $r$ sets $H_1,\dots, H_r\subseteq [n]$ is called a {\em $k$-sunflower} if there exists $S\subseteq [n]$ such that $H_i\cap H_j = S$ for all $i\ne j\in [r]$.

\begin{lemma}[Erd\H{o}s-Rado sunflower lemma~\cite{ErdosRado1960_sunflower}]\label{lem:sunflower}
    Let $k$ and $r\ge 2$ be positive integers. If $\cF\subseteq \binom{[n]}{k}$ has size $|\cF|\ge k!(r-1)^k$, then $\cF$ contains an $r$-sunflower.
\end{lemma}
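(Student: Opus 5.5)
We prove \cref{lem:sunflower} by induction on $k$, using the classical Erd\H{o}s--Rado argument built around a maximal family of pairwise disjoint sets. For $k=1$ the family consists of at least $r-1$ distinct singletons. Since they are pairwise disjoint, any of them together form a sunflower with empty kernel $S=\varnothing$. (At the exact threshold there are only $r-1$ petals, so strictly one wants $|\cF|>k!(r-1)^k$. We adopt the standard convention that the strict inequality is what the argument uses; it is harmless for the application in \cref{lem:modeling}, where $\cF$ is much larger.)

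For the inductive step, let $k\ge 2$ and $|\cF|> k!(r-1)^k$. Choose a maximal subfamily $H_1,\dots,H_t\in\cF$ of pairwise disjoint sets. If $t\ge r$, these already form an $r$-sunflower with kernel $\varnothing$. Otherwise $t\le r-1$, and we set $U=H_1\cup\dots\cup H_t$, so that $|U|\le k(r-1)$. By maximality every $F\in\cF$ meets $U$. By pigeonhole, some element $x\in U$ lies in at least
\[ \frac{|\cF|}{k(r-1)} > (k-1)!(r-1)^{k-1} \]
members of $\cF$.

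We then apply the inductive hypothesis to the link $\cF_x=\{F\setminus\{x\}: x\in F\in\cF\}\subseteq\binom{[n]}{k-1}$. Since $F\mapsto F\setminus\{x\}$ is injective on sets containing $x$, this family has more than $(k-1)!(r-1)^{k-1}$ members. Hence it contains an $r$-sunflower $G_1,\dots,G_r$ with some kernel $S'$. Adding $x$ back to each set gives $G_i\cup\{x\}\in\cF$, and these sets pairwise intersect in exactly $S'\cup\{x\}$, because $x\notin G_i$. This is the desired $r$-sunflower in $\cF$.

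There is no genuine obstacle here. The only delicate points are bookkeeping: we must check that removing $x$ preserves distinctness of the sets, and we must keep track of the strict-versus-non-strict threshold in the base case, as noted above.
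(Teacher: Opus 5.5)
Your argument is correct and is the classical Erd\H{o}s--Rado induction (maximal pairwise disjoint subfamily, pigeonhole onto a point of $U$, induction on the link); the paper does not reprove the lemma but cites it, and this is the standard proof behind that citation.

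You are also right that the threshold as stated, with $\ge$, is false for $k=1$: take $r-1$ singletons. For $k\ge 2$ the non-strict version does hold, because each $H_i$ is counted $k\ge 2$ times in the pigeonhole sum.

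The strict version you prove is exactly what the paper needs, but not because the family there is ``much larger''. The reason is that \cref{lem:modeling} uses the lemma contrapositively: each $(d+2)$-sunflower-free layer of $\cA_B$ then has at most $k!(d+1)^k$ sets, and that is the bound summed there.
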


The bound in \cref{lem:sunflower} was later improved by Alweiss-Lovette-Wu-Zhang~\cite{AlweissLovettWuZhangAnnals2021} and Bell--Chueluecha--Warnke~\cite{BellChueluechaWarnke2021}, but the bound stated in \cref{lem:sunflower} suffices for our purposes. Now we begin the proof of \cref{thm:d>2s}.

\subsection{Structural Analysis}\label{subsec:structural}

In this section, we prove that one can capture the intersection properties in $\cF$ using set families of constant size. First, observe that we can interpret the uniform witness condition as follows. For any $B\in \binom{[n]}{s}$ and $F,F'\in \cF$ such that $B_F = B$ and $B\subseteq F'$, we must have~$(F\setminus B)\cap (F'\setminus B)\ne \varnothing$. In other words, the two families $\{F\setminus B\mid  F\in \cF,\,B_F=B\}$ and $\{F'\setminus B\mid F' \in \cF,\,B\subseteq F'\}$ are cross-intersecting. 

Define $\cB = \{B\in \binom{[n]}{s}\mid \exists F\in \cF \text{ such that } B_F = B\}$ to be the set of all witnesses for $\cF$. We have the following modeling lemma that captures for each $B\in \cB$ the structure of the sets $F\in \cF$ containing $B$.

\begin{lemma}[Modeling Lemma]\label{lem:modeling}
    For every $B\in \cB$, there exists a set family $\cA_B\subseteq \binom{[n]\setminus B}{\le d+1-s}$ consisting of sets of size at most $d+1-s$ satisfying the following:
    \begin{enumerate}
        \item $\cA_B$ is an intersecting family, i.e. $A\cap A'\ne \varnothing$ for all $A,A'\in \cA_B$.\label{item:intersecting}
        \item  For every $F\in \cF$ such that $B_F = B$, there exists $A\in \cA_B$ such that $A\subseteq F$.\label{item:witness}
        \item  For every $F\in \cF$ such that $B\subseteq F$ (possibly $B_F\ne B$), we have $A\cap F\ne \varnothing$ for all $A\in \cA_B$.\label{item:contain}
        \item  $|\cA_B|=O_d(1)$.\label{item:AB-size}
    \end{enumerate}
\end{lemma}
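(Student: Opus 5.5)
The plan is to reduce everything to the cross-intersecting pair observed just before the lemma, and then take $\cA_B$ to be a family of \emph{minimal kernels}, whose size will be bounded by \cref{lem:sunflower}.

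Fix $B\in\cB$ and set $m=d+1-s$. Define
\[
\cP=\{F\setminus B\mid F\in\cF,\ B_F=B\},\qquad \cQ=\{F'\setminus B\mid F'\in\cF,\ B\subseteq F'\}.
\]
Both are $m$-uniform families on $[n]\setminus B$. They are cross-intersecting by the witness condition, and $\cP\subseteq\cQ$. Let $\mathcal G$ be the family of sets $T$ that are contained in some member of $\cP$ and satisfy $T\cap Q\neq\varnothing$ for all $Q\in\cQ$. Let $\mathcal G'\subseteq\mathcal G$ consist of those $T\in\mathcal G$ that meet every $U\in\mathcal G$. I will define $\cA_B$ to be the set of inclusion-minimal elements of $\mathcal G'$. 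Every element of $\mathcal G'$ has size at most $m$, as required.

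\emph{Properties (1)--(3).} Property (3) holds because $\cA_B\subseteq\mathcal G$. Property (1) holds because any $A,A'\in\cA_B$ lie in $\mathcal G'\subseteq\mathcal G$, so $A$ meets $A'$ by the definition of $\mathcal G'$. For (2), it suffices to show $\cP\subseteq\mathcal G'$; then every $F\setminus B$ with $B_F=B$ contains a minimal element of $\mathcal G'$.

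\begin{itemize}
\item Let $P\in\cP$. Then $P\in\mathcal G$, since it is contained in itself and meets all of $\cQ$ by cross-intersection.
\item Let $U\in\mathcal G$. Then $U$ meets every member of $\cQ$, and $P\in\cQ$ because $\cP\subseteq\cQ$. Hence $U\cap P\ne\varnothing$.
\end{itemize}
So $P\in\mathcal G'$. The key point is that $\cP\subseteq\cQ$ makes $\cP$ itself self-consistent for the "meets every $U\in\mathcal G$" requirement.

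\emph{Property (4), the main step.} Fix $k\le m$. Suppose $\cA_B$ had at least $k!(m+1)^k$ members of size $k$. By \cref{lem:sunflower} with $r=m+2$, there is a sunflower $A_1,\dots,A_{m+2}\in\cA_B$ with core $C$ and pairwise disjoint petals $A_i\setminus C$. I then check that $C\in\mathcal G'$.

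\begin{itemize}
\item $C$ lies in a member of $\cP$, because $C\subseteq A_1$.
\item $C$ meets every $Q\in\cQ$. Indeed $Q$ meets each $A_i$, and $|Q|=m<m+2$, so $Q$ cannot meet all the disjoint petals; hence $Q$ meets some $A_i$ only in $C$.
\item $C$ meets every $U\in\mathcal G$. Each $A_i\in\mathcal G'$ meets $U$, and $|U|\le m$, so the same pigeonhole argument forces $U\cap C\ne\varnothing$.
\end{itemize}

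Since $C\subsetneq A_1$ (the sets $A_i$ are distinct), this contradicts the minimality of $A_1$. Therefore $|\cA_B|\le\sum_{k\le m}k!(m+1)^k=O_d(1)$.

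The main obstacle is choosing a definition that makes intersectingness and the size bound compatible. Naive minimal transversals of $\cQ$ need not be pairwise intersecting. The fix is the self-referential condition defining $\mathcal G'$, and it must be checked that sunflower cores remain inside $\mathcal G'$, which is exactly the step above.
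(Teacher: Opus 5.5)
Your proof is correct, but it is organized differently from the paper's.

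The paper builds $\cA_B$ algorithmically. It starts from $\{F\setminus B \mid F\in\cF,\ B_F=B\}$ and repeatedly replaces a $(d+2)$-sunflower by its core. At each step it checks that properties (1)--(3) are preserved, using the pigeonhole fact that a set of size at most $d+1-s$ meeting every member of a $(d+2)$-sunflower must meet its core. It stops once the family is sunflower-free, and the sunflower lemma then bounds its size.

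You instead define $\cA_B$ canonically, as the inclusion-minimal elements of $\mathcal{G}'$. Properties (1) and (3) then follow directly from the definitions. Property (2) follows from $\cP\subseteq\mathcal{Q}$, which gives $\cP\subseteq\mathcal{G}'$. You use the same pigeonhole fact only once: to show that the core of a large sunflower inside $\cA_B$ would itself lie in $\mathcal{G}'$, contradicting minimality. That core is automatically nonempty, since it meets every member of the nonempty $\mathcal{Q}$.

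What each route buys:
\begin{itemize}
\item Your version is order-independent and avoids the invariant-and-termination bookkeeping. The self-referential condition defining $\mathcal{G}'$ combines pairwise intersection with closure under taking sunflower cores, which is exactly what the minimality argument needs.
\item The paper's version is more hands-on. It makes explicit that every member of $\cA_B$ arises from $\{F\setminus B\}$ by iterated core-taking.
\end{itemize}
Using $m+2$ petals rather than $d+2$ makes no difference.
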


\begin{proof}
    We construct $\cA_B$ iteratively. Initialize $\cA \eqdef\{F\setminus B \mid F\in \cF, B_F=B\}$. If $\cA$ contains a $(d+2)$-sunflower $A_1,\dots, A_{d+2}\in \cA$ with a core $S\subseteq [n]\setminus B$, then we remove the sets $A_1,\dots, A_{d+2}$ from $\cA$ and add the set $S$ to $\cA$. We repeat this operation until $\cA$ is $(d+2)$-sunflower-free and set $\cA_B$ as the final ($d+2$)-sunflower-free family. We show that $\cA_B$ satisfies all the properties in \cref{lem:modeling}.

    First, note that the initial family $\cA_0 = \{F\setminus B \mid F\in \cF, B_F=B\}$ satisfies properties (1)-(3). Indeed, $\cA_0$ must be an intersecting family, because if there exists $A\cap A' = \varnothing$, then the sets $(A\cup B),  (A'\cup B) \in \mc F$ violate the property that $B$ is the witness of $A\cup B$ and $A'\cup B$. Property (2) is trivially satisfied by definition. Property (3) also follows from the fact that $B$ is the witness for sets of the form $A\cup B$ where $A\in \cA_0$. For a set $F\supseteq B$, we must have $F\cap (A\cup B)\ne B$ and thus $F\cap A\ne \varnothing$ for all $A\in \cA_0$. 

    Now we show that the operation of replacing each sunflower in $\cA$ by its core preserves properties (1)-(3). Suppose $\cA$ satisfies properties (1)-(3). We first show that $\cA$ is intersecting after replacing a $(d+2)$-sunflower $A_1,\dots, A_{d+2}\in \cA$ with the core $S\subseteq [n]\setminus B$. Let $T\in \cA\setminus \{A_1,\dots, A_{d+2}\}$ and write $A_i = S\sqcup D_i$ for all $i\in [d+2]$. We want to check that $T\cap S \neq \varnothing$. 
    Suppose for contradiction that $T\cap S = \varnothing$, then we must have $T\cap D_i\ne \varnothing$ for all $i\in [d+2]$. Since $D_1,\dots, D_{d+2}$ are pairwise disjoint, we must have that $T\cap D_i$ are pairwise disjoint for all $i\in [d+2]$. If follows $|T|\ge \sum_{i = 1}^{d+2}|T\cap D_i|\ge d+2$ contradicting the fact that $|T| \leq d+1-s$. Therefore we must have $T\cap S\ne\varnothing$ and thus the family$\cA$ after the replacement operation remains intersecting. Similarly, Property (3) holds because given $F\in \cF$ such that $B\subseteq F$, we have $F\cap A_i \neq \varnothing$ for all $i\in [d+2]$ and thus we must have $F\cap S\ne \varnothing$ by the same argument. Property (2) is clearly preserved since $S\subseteq A_i$ for all $i\in [d+2]$.

    Finally, we upper bound the size of $\cA_B$. The family $\cA_B$ is ($d+2$)-sunflower-free and consists of sets of size at most $d+1-s$. We must have $\cA_B$ is ($d+2$)-sunflower-free at every uniformity. Thus by \cref{lem:sunflower} we have
    \[|\cA_B|\le \sum_{k = 1}^{d+1-s} k!(d+1)^k=O_d(1).\]
    This completes the proof.
\end{proof}

Define 
\[
\cF_B = \{F \in \cF\mid \exists A\in \cA_B\text{ such that } A\cup B \subseteq F\}.
\]
In particular, we have $F \in \cF_B$ if $B_F=B$, but $\cF_B$ may also include other sets. Note that at this point we no longer need to keep track of the particular choice of $B_F$ for each $F\in \cF$, instead we record that 
\[\cF = \bigcup_{B \in \cB} \cF_B.\]
For $B \in \cB$, define $\alpha_B = \min_{A \in \cA_B} |A|$.

\begin{claim}\label{claim:FB-size}
    For any $B\in \cB$, we have
    \[|\cF_B| = O_d\lpr{n^{d+1-s-\alpha_B}}.\]
\end{claim}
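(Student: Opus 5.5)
Since every $F\in\cF_B$ contains $A\cup B$ for some $A\in\cA_B$, a union bound over $\cA_B$ should suffice:
\[
|\cF_B|\le \sum_{A\in\cA_B}\#\{F\in\cF : A\cup B\subseteq F\}.
\]
The plan is to show that each summand is $O_d(n^{d+1-s-\alpha_B})$ and then use property (\ref{item:AB-size}) of \cref{lem:modeling}, namely $|\cA_B|=O_d(1)$, to absorb the number of summands into the implied constant.

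For a fixed $A\in\cA_B$, the set $A$ lies in $[n]\setminus B$, so $|A\cup B|=|A|+s$. Any $(d+1)$-set $F\supseteq A\cup B$ is therefore determined by its remaining $d+1-s-|A|$ elements, chosen from $[n]\setminus(A\cup B)$. This gives at most
\[
\binom{n}{d+1-s-|A|}=O_d\lpr{n^{d+1-s-|A|}}
\]
such sets, where the binomial coefficient is interpreted as $0$ if $|A|>d+1-s$; this case cannot occur anyway, since every set in $\cA_B$ has size at most $d+1-s$. By the definition $\alpha_B=\min_{A\in\cA_B}|A|$, we have $|A|\ge\alpha_B$, so each summand is $O_d(n^{d+1-s-\alpha_B})$ once $n\ge 1$. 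Summing over the $O_d(1)$ sets in $\cA_B$ gives the claim.

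There is no real obstacle here. The only points to check are that $A$ and $B$ are disjoint, which holds because $\cA_B\subseteq\binom{[n]\setminus B}{\le d+1-s}$, and that the implied constant depends only on $d$. The latter holds because both $|\cA_B|$ and the binomial constant $1/(d+1-s-|A|)!\le 1$ are bounded in terms of $d$.
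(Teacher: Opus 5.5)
Your proof is correct and is the same argument the paper uses: a union bound over the $O_d(1)$ sets $A\in\cA_B$, where each $A$ contributes at most $O_d(n^{d+1-s-|A|})\le O_d(n^{d+1-s-\alpha_B})$ supersets of $A\cup B$. Summing per $A$ is slightly cleaner than the paper's one-line estimate, whose binomial coefficient mixes $\alpha_B$ and $|A|$.
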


\begin{proof}
   Since each $F\in \mc F_B$ contains an element of $\mc A_B$ as a subset, we get by double counting and the definition of $\alpha_B$:
   \[|\cF_B|\le |\cA_B|\cdot{n-s-\alpha_B \choose d+1-s-|A|} = |\cA_B|\cdot  O_d\lpr{n^{d+1-s-\alpha_B}}.\]
   By \cref{item:AB-size} in \cref{lem:modeling}, we have $|\cA_B| = O_d(1)$ and thus we have the desired bound.
\end{proof}

Define $\cB_1 = \{B \in \cB \mid \alpha_B=1\}$ and $\cB_{\ge 2} = \cB \setminus \cB_{1}$. For $B \in \cB_1$ let $x_B$ be the unique element so that $\{x_B\}\in \cA_B$ ($x_B$ is unique since $\mc A_B$ is an intersecting family). Since $\cA_B$ is intersecting, if $\{x_B\}\in \cA_B$, then we can redefine $\cA_B = \{\{x_B\}\}$ and the properties in \cref{lem:modeling} are still satisfied. Now we define an injection of a subset of $\cF$ into $\binom{[n]}{d}$ which will be a crucial part of our proof.

\begin{lemma}[Injective mapping $F\rightarrow E$]\label{lem:injection}
    For $B \in \cB_1$, define $\cE_B = \{F \setminus \{x_B\} \mid F \in \cF_B\}$ and let $\cE = \bigcup_{B\in \cB_1} \cE_{B}$. Then we have the following properties:
    \begin{enumerate}
        \item If $F \in \cF$, $B \in \cB_1$ and $E \in \cE_B$ are such that $E \subseteq F$, then we have $F = E \cup \{x_B\}$.\label{item:injective:1}
        
        \item For $B\neq B' \in \cB_1$ with $x_{B} \neq x_{B'}$, we have $\cE_B\cap \cE_{B'} = \varnothing$.\label{item:injective:2}
        
        \item For $B, B' \in \cB_1$, let
        \[\cU(B, B') = \left\{E \in {[n] \choose d} \lmid B\cup B' \subseteq E, ~ x_{B}, x_{B'} \not\in E\right\}.\]
        Then we have $\cU(B, B') \cap \cE = \varnothing$ for any $B, B' \in \cB_1$ with $x_B \neq x_{B'}$.\label{item:injective:3}
    \end{enumerate}
\end{lemma}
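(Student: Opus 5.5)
Item (1) comes first, because (2) and (3) both reduce to it.

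\emph{Item (1).} Let $E\in\cE_B$, so $E=F_0\setminus\{x_B\}$ for some $F_0\in\cF_B$. Since $\cA_B=\{\{x_B\}\}$, we have $F_0\supseteq B\cup\{x_B\}$, and therefore $B\subseteq E$ because $x_B\notin B$. Now take $F\in\cF$ with $E\subseteq F$. Then $B\subseteq F$, so \cref{item:contain} of \cref{lem:modeling} applied to $A=\{x_B\}$ gives $x_B\in F$. Hence $E\cup\{x_B\}\subseteq F$. Both sides have size $d+1$, because $|E|=d$ and $x_B\notin E$. So $F=E\cup\{x_B\}$.

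\emph{Item (2).} Suppose $E\in\cE_B\cap\cE_{B'}$. Then $E\cup\{x_{B'}\}\in\cF_{B'}\subseteq\cF$ and it contains $E$. Item (1) applied to $B$ gives $E\cup\{x_{B'}\}=E\cup\{x_B\}$. Since $x_{B'}\notin E$, this forces $x_B=x_{B'}$, which contradicts the hypothesis.

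\emph{Item (3).} Suppose $E\in\cU(B,B')\cap\cE$, so $E\in\cE_{B''}$ for some $B''\in\cB_1$. Put $F=E\cup\{x_{B''}\}\in\cF$. This set contains $B$, so \cref{item:contain} for $B$ gives $x_B\in F$. Since $x_B\notin E$, we get $x_B=x_{B''}$. The same argument with $B'$ gives $x_{B'}=x_{B''}$. Hence $x_B=x_{B'}$, a contradiction.

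The only point needing care is the fact that $\cF_B$ consists exactly of the sets containing $B\cup\{x_B\}$ once $\cA_B$ has been replaced by $\{\{x_B\}\}$. For that we check that this replacement preserves property (3), which is immediate because $\{x_B\}$ was already in the original $\cA_B$. Everything else is direct.
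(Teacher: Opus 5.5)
Your proof is correct and matches the paper's argument step for step. Item (1) uses property (3) of the Modeling Lemma with $\cA_B=\{\{x_B\}\}$, and items (2) and (3) both reduce to the fact that $B\subseteq F\in\cF$ forces $x_B\in F$. In item (3) you cite property (3) of the Modeling Lemma directly where the paper cites item (1); yours is the more accurate reference, since item (1) as stated needs an element of $\cE_B$ contained in $F''$.
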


The idea of constructing an injection from $\mc F$ into ${[n] \choose d}$ appears in earlier works as well (see \cite[Claim 3.1]{CXYZ25} and \cite{YY25}). However, our implementation of this idea is different and, in particular, \cref{item:injective:3} in \cref{lem:injection} is new.

\begin{proof}
    We first prove \cref{item:injective:1}. From the assumption, we have $B\subseteq E\subseteq F$. By \cref{item:contain} in \cref{lem:modeling}, we know that $F\cap A\ne \varnothing$ for all $A\in \cA_B$. Since $B\in \cB_1$, we have $\cA_B = \{\{x_B\}\}$. Thus, $x_B\in F$ and we must have $F = E\cup \{x_B\}$.

    To prove \cref{item:injective:2}, suppose for contradiction that we have $E\in \cE_B\cap \cE_{B'}$ for $B$ and $B'$ with $x_B\ne x_{B'}$. Then we have $E = F\setminus \{x_B\} = F'\setminus \{x_{B'}\}$ for some $F,F'\in\cF$. By \cref{item:injective:1} we must have $F = E\cup \{x_B\} = E\cup \{x_{B'}\}$ and thus $x_B =x_{B'}$, contradicting the assumption.

    Finally, to prove \cref{item:injective:3}, consider $B, B'\in \cB_1$ with $x_B\ne x_{B'}$. Assume for contradiction that there exists $E \in \cU(B, B')\cap \cE$, then we must have $E = F'' \setminus \{x_{B''}\}$ for some $B'' \subseteq F'' \in \cF$. On the other hand, since $B\subseteq F''$, by \cref{item:injective:1} we must have $x_{B''} = x_B$. Similarly, since $B'\subseteq F''$, we have $x_{B'} = x_B$. Therefore we have $x_B = x_{B''}$ contradicting the assumption that $x_B\ne x_{B'}$.
\end{proof}

Define $\cF_1 = \bigcup_{B\in \cB_1}\cF_B$ and $\cF_{\ge 2} = \bigcup_{B\in \cB_2}\cF_B$. Note that by definition $|\cF|\le |\cF_1| + |\cF_{\ge 2}|$. Then \cref{lem:injection} implies that the map $\phi: \cF_1\to \binom{[n]}{d}$ defined by $F \mapsto F\setminus \{x_B\}$ is an injection. In particular, $\cE$ is the image $\phi(\cF)\subseteq \binom{[n]}{d}$, so we have $|\cF_1|= |\cE|$, and it suffices to upper bound $\cE$. From \cref{item:injective:3} in \cref{lem:injection}, the set of $\cU(B,B')$ for $B,B'\in \cB_1$ with $x_B\ne x_{B'}$ is contained in the complement of $\cE$.

In order to have $\cU(B,B')\neq\varnothing$, we need $x_B\notin B'$ and $x_{B'}\notin B$. Thus, we define
\[\cP\eqdef\{(B,B')\in \cB_1\times\cB_1\mid x_B\neq x_{B'},x_B\notin B',x_{B'}\notin B\}\quad\text{and}\quad\cU\eqdef\bigcup_{(B,B')\in \cP}\cU(B,B').\]
Then we have the following useful inequality for bounding $|\cF_1|$ as a direct corollary of \cref{lem:injection}.

\begin{corollary}\label{cor:injection-ineq}
    We have
    \begin{equation}\label{eq:injection-ineq}
    |\cF_1| + |\cU| \le {n \choose d}.    
    \end{equation}
\end{corollary}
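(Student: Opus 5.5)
We will read \cref{eq:injection-ineq} as a disjointness statement inside $\binom{[n]}{d}$: the two sets $\cE$ and $\cU$ are disjoint subsets of $\binom{[n]}{d}$, and $|\cF_1| = |\cE|$. Given both facts, $|\cF_1| + |\cU| = |\cE| + |\cU| = |\cE\cup\cU| \le \binom{n}{d}$. So the plan has two steps: first prove that $\phi$ is injective, then prove that $\cE\cap\cU=\varnothing$.

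\emph{Step 1: injectivity of $\phi$.} For every $F\in\cF_1$ we fix one $B\in\cB_1$ with $F\in\cF_B$ and set $\phi(F)=F\setminus\{x_B\}$. Then $\phi(F)\in\cE_B\subseteq\cE$, and every element of $\cE$ arises this way, so $\phi(\cF_1)=\cE$.

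Suppose $\phi(F)=\phi(F')=E$, with $F$ assigned to $B$ and $F'$ assigned to $B'$. Since $E\in\cE_B$ and $E\subseteq F'$, \cref{item:injective:1} of \cref{lem:injection} gives $F'=E\cup\{x_B\}$. Also $F=E\cup\{x_B\}$ by the definition of $\phi$. Hence $F=F'$.

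This argument never needs $x_B=x_{B'}$, so the choice of $B$ for each $F$ does not matter. Therefore $|\cF_1|=|\cE|$.

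\emph{Step 2: $\cE\cap\cU=\varnothing$.} By definition $\cU$ is the union of the sets $\cU(B,B')$ over $(B,B')\in\cP$. Every pair in $\cP$ satisfies $x_B\neq x_{B'}$, so \cref{item:injective:3} of \cref{lem:injection} gives $\cU(B,B')\cap\cE=\varnothing$ for each such pair. Taking the union over $\cP$ yields $\cU\cap\cE=\varnothing$.

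There is no real obstacle here, since all the content sits in \cref{lem:injection}. The only care needed is in Step 1: $\phi$ must be well-defined even when $F$ lies in several $\cF_B$. Choosing one $B$ per $F$ and invoking \cref{item:injective:1} handles this. The restriction in $\cP$ to pairs with $x_B\notin B'$ and $x_{B'}\notin B$ only discards pairs for which $\cU(B,B')$ is empty, so it plays no role in the inequality itself.
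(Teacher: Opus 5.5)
Your argument is the paper's own: injectivity of $F\mapsto F\setminus\{x_B\}$ comes from \cref{item:injective:1}, disjointness of $\cE$ and $\cU$ comes from \cref{item:injective:3}, and fixing one $B$ per $F$ correctly settles well-definedness. The proof is correct, but drop the claim $\phi(\cF_1)=\cE$ (hence $|\cF_1|=|\cE|$), which the paper also asserts.

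Suppose some $F$ lies in $\cF_B\cap\cF_{B'}$ with $x_B\neq x_{B'}$. Then both $F\setminus\{x_B\}$ and $F\setminus\{x_{B'}\}$ are in $\cE$, but only one of them is $\phi(F)$, and by \cref{item:injective:1} the other is the image of no set. This really happens. Take $d=4$, $s=2$, $F=\{a,b,c,1,2\}$, six sets $\{a,b,1\}\cup D_i$ and six sets $\{b,c,2\}\cup D'_i$, where the petals are pairwise disjoint $2$-element sets avoiding $\{a,b,c,1,2\}$. Use witness $\{a,b\}$ for $F$ and the first six sets, and $\{b,c\}$ for the other six. Then $x_{\{a,b\}}=1$, $x_{\{b,c\}}=2$, and $|\cE|=14>13=|\cF_1|$.

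The overstatement is harmless. You only need $\phi(\cF_1)\subseteq\cE$, which gives $|\cF_1|\le|\cE|$, and then $|\cF_1|+|\cU|\le|\cE|+|\cU|\le\binom{n}{d}$.
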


\subsection{Star approximation}\label{subsec:star-approx}
In this section, we prove that any $s$-witness family with $1\le s\le d/2$ and size close to maximal can be approximated by a star family up to a bounded symmetric difference.

Recall that $\cF_1 = \bigcup_{B\in \cB_1}\cF_B$, $\cF_{\ge 2} = \bigcup_{B\in \cB_{\geq 2}}\cF_B$, and $|\cF|\le |\cF_1| + |\cF_{\ge 2}|$. We have the following estimate.

\begin{claim}\label{claim:F2-size}
    We have $|\cF_{\ge 2}| = O_d(n^{d-1})$.
\end{claim}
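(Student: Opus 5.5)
The plan is to sum the bound of \cref{claim:FB-size} over all witnesses $B\in\cB_{\ge 2}$ and to use that there are only $O(n^s)$ possible witnesses. By definition,
\[
|\cF_{\ge 2}| \le \sum_{B\in \cB_{\ge 2}} |\cF_B| .
\]
For each $B\in\cB_{\ge 2}$ we have $\alpha_B\ge 2$, so \cref{claim:FB-size} gives $|\cF_B| = O_d(n^{d+1-s-\alpha_B}) = O_d(n^{d-1-s})$. The implied constant depends only on $d$ and not on $B$: it comes from the bound $|\cA_B|\le \sum_{k=1}^{d+1-s} k!(d+1)^k$ of \cref{lem:modeling}, together with the binomial estimate $\binom{n}{j}\le n^j$.

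Since $\cB\subseteq\binom{[n]}{s}$, the number of terms in the sum is at most $\binom{n}{s}\le n^s$. Therefore
\[
|\cF_{\ge 2}| \le n^s\cdot O_d(n^{d-1-s}) = O_d(n^{d-1}).
\]

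The only point needing care, and the main obstacle, is confirming that $\alpha_B$ is well defined and at least $1$, so that $\cB=\cB_1\sqcup\cB_{\ge 2}$ really partitions $\cB$ with $\alpha_B\ge 2$ on $\cB_{\ge2}$. Here a sunflower core could a priori be empty, which would give $\alpha_B=0$. I would rule this out using \cref{item:contain} of \cref{lem:modeling}. Since $B\in\cB$, there is some $F\in\cF$ with $B_F=B$, and in particular $B\subseteq F$. Property (3) then forces $A\cap F\ne\varnothing$ for every $A\in\cA_B$, which is impossible for $A=\varnothing$. Hence $\varnothing\notin\cA_B$ and $\alpha_B\ge 1$.

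The sets of $\cA_B$ are nonempty and of size at most $d+1-s$, so the exponent $d+1-s-\alpha_B$ in \cref{claim:FB-size} is a nonnegative integer and the estimate applies as stated. This completes the plan.
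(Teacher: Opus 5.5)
Your proof is correct and is essentially the paper's argument. You bound $|\cF_{\ge 2}|$ by $\sum_{B\in\cB_{\ge 2}}|\cF_B|$, apply \cref{claim:FB-size} with $\alpha_B\ge 2$ to get $O_d(n^{d-s-1})$ per term uniformly in $B$, and multiply by $|\cB_{\ge 2}|\le\binom{n}{s}$. Your extra check that $\varnothing\notin\cA_B$ (via property (3) of \cref{lem:modeling}) is a point the paper leaves implicit; it rules out $\alpha_B=0$ and so ensures $\alpha_B\ge 2$ on $\cB_{\ge 2}=\cB\setminus\cB_1$, and your justification of it is valid.
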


\begin{proof}
    Note that we have
    \[|\cF_{\ge 2}|\le \sum_{B\in \cB_{\ge 2}}|\cF_B|.\]
    For each $B\in \cB_{\ge 2}$, we have $\alpha_B\ge 2$ and thus by \cref{claim:FB-size} we have $|\cF_B| = O_d(n^{d-s-1})$. Therefore, we obtain
    \[|\cF_{\ge 2}|\le |\cB_{\ge 2}|\cdot O_d(n^{d-s-1}) \le \binom{n}{s}\cdot O_d(n^{d-s-1}) = O_d(n^{d-1}).\qedhere\]
\end{proof}

Now we state our result on star approximation.

\begin{prop}\label{prop:star-approx}
    Suppose $d \ge 2s$ and $n\ge 2(d+1)$. For any $K\ge 0$, if $\cF\subseteq \binom{[n]}{d+1}$ is an $s$-witness family with size 
    \[|\cF|\ge {n-1 \choose d} - K n^{d-1},\]
    then there exists a star family $\cF^*$ with $|\cF\triangle \cF^*| \le O_d((K+1) n^{d-1})$.
\end{prop}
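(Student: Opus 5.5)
The plan is to combine \cref{claim:F2-size} with \cref{cor:injection-ineq}, and to show that $\cU$ is forced to be large unless essentially all of $\cF_1$ is routed through a single element $x$.

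\textbf{Reducing to a bound on $\cU$.} By \cref{claim:F2-size} we have $|\cF_1|\ge |\cF|-|\cF_{\ge 2}|\ge \binom{n-1}{d}-O_d((K+1)n^{d-1})$. Then \eqref{eq:injection-ineq} gives
\[|\cU|\le \binom{n}{d}-\binom{n-1}{d}+O_d((K+1)n^{d-1}) = O_d((K+1)n^{d-1}).\]
So the goal is to show that a small $\cU$ forces a star structure.

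\textbf{Grouping by the element $x_B$.} For each element $x$, let $\cS_x=\{B\in\cB_1: x_B=x\}$ and $\cE_x=\bigcup_{B\in\cS_x}\cE_B$. The families $\cE_x$ are pairwise disjoint by \cref{item:injective:2} of \cref{lem:injection}, and $\sum_x|\cE_x|=|\cF_1|$. Every $E\in\cE_x$ avoids $x$ and contains some member of $\cS_x$. Hence $\cE_x\subseteq \mathcal{T}_x$, where $\mathcal{T}_x$ is the up-set in $\binom{[n]}{d}$ generated by $\cS_x$. Write $\mu_x=|\mathcal{T}_x|/\binom{n}{d}$. Up to $O_d(n^{d-1})$ errors from sets containing $x$ or $y$ (or from $B\cup B'$ meeting $x$ or $y$), $\cU$ contains $\mathcal{T}_x\cap\mathcal{T}_y$ for every pair $x\neq y$. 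Indeed, $E\supseteq B\cup B'$ with $x,y\notin E$ lies in $\cU(B,B')$, and pairs violating the conditions of $\cP$ only produce sets containing $x$ or $y$. Using the disjointness of the $\cE_x$ together with $\cE\cap\cU=\varnothing$, I aim to show
\[\sum_x\mu_x \ge 1-O_d\!\left(\tfrac{K+1}{n}\right),\qquad \sum_{x\neq y}\frac{|\mathcal{T}_x\cap\mathcal{T}_y|}{\binom nd}\le O_d\!\left(\tfrac{K+1}{n}\right).\]

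\textbf{The main obstacle: lower-bounding the overlaps.} I need $|\mathcal{T}_x\cap\mathcal{T}_y|\gtrsim \mu_x\mu_y\binom nd$, which is a Harris--Kleitman/FKG-type correlation inequality for up-sets. In the slice this holds only approximately, but the error is $O_d(n^{d-1})$ for $n$ large, since each $\mathcal{T}_x$ is generated by $s$-sets. The condition $d\ge 2s$ is exactly what guarantees $|B\cup B'|\le d$, so that $\cU(B,B')\neq\varnothing$ and $\mathcal{T}_x\cap\mathcal{T}_y$ genuinely contributes to $\cU$. The error terms summed over all pairs $x\neq y$ could blow up. To control them, I would first restrict attention to the $O_d(1)$ elements with $\mu_x\ge n^{-1/2}$, say. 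The remaining elements are handled separately, using $|\cE_x|\le|\cS_x|\binom{n}{d-s}$ and the counting behind \cref{claim:FB-size}. I expect this error bookkeeping to be the hardest part.

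\textbf{Concluding the star structure.} From $\sum\mu_x\approx 1$ and $\sum_{x\ne y}\mu_x\mu_y=O_d((K+1)/n)$, one element $x$ must satisfy $\sum_{y\neq x}\mu_y=O_d((K+1)/n)$. Therefore $\sum_{y\ne x}|\cE_y|=O_d((K+1)n^{d-1})$. Let $\cF^*$ be the star at $x$. Then $|\cF\setminus\cF^*|\le |\cF_{\ge2}|+\sum_{y\ne x}|\cE_y|=O_d((K+1)n^{d-1})$. Finally,
\[|\cF^*\setminus\cF|=\binom{n-1}{d}-|\cF\cap\cF^*|\le \binom{n-1}{d}-|\cF|+|\cF\setminus\cF^*|=O_d((K+1)n^{d-1}),\]
which bounds $|\cF\triangle\cF^*|$ as claimed.
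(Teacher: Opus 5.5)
Your reduction to $|\cU|=O_d((K+1)n^{d-1})$ and your concluding step are fine. Both displayed inequalities are also attainable: every member of $\mathcal T_x\cap\mathcal T_y$ containing $x$ or $y$ contains some $B\cup\{x_B\}$ with $B\in\cB_1$, so over all pairs there are only $O_d(n^{d-1})$ such sets, and each $d$-set lies in at most $\binom ds$ of the $\mathcal T_x$.

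The gap is the step from $\sum_{x\ne y}|\mathcal T_x\cap\mathcal T_y|$ to $\sum_{x\ne y}\mu_x\mu_y$. The Harris--Kleitman-type inequality you invoke is false on the slice, even for families generated by $s$-sets. Take $d=2$, $s=1$, and let $\cS_x,\cS_y$ be the singletons of disjoint sets $W_x,W_y$ of sizes $an,bn$. Then $\mu_x\mu_y\approx(2a-a^2)(2b-b^2)$, whereas $|\mathcal T_x\cap\mathcal T_y|/\binom n2\approx 2ab$. So the deficit is of order $n^d$, not $O_d(n^{d-1})$, and for small $a,b$ you lose a factor of $2$. At best a constant-factor version could hold, and you give no argument for it. Separately, per-pair errors of size $O_d(n^{d-1})$ would be worthless after summing over $\Theta(n^2)$ pairs. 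Your remedy also fails as stated: $\sum_x\mu_x\le\binom ds$ only limits the number of $x$ with $\mu_x\ge n^{-1/2}$ to $O_d(\sqrt n)$, not $O_d(1)$. The treatment of the light elements is left unspecified.

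The missing idea is to count pairs of witnesses instead of measuring the up-sets $\mathcal T_x$. Each $(B,B')\in\cP$ gives $|\cU(B,B')|\ge\binom{n-2s-2}{d-2s}$; this is where $d\ge 2s$ is used. A $d$-set contains $B\cup B'$ for at most $\binom ds\binom{d-s}s$ pairs, so $|\cU|=\Omega_d(n^{d-2s}|\cP|)$. Now take $x_0$ maximizing $|\cB_1(x)|$ and put $M=|\cB_1|-|\cB_1(x_0)|$. Every $B\in\cB_1$ has at least $M$ partners $B'$ with $x_{B'}\ne x_B$, so $|\cP|\ge|\cB_1|M-2|\cB_1|\binom{n-1}{s-1}$. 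Combine this with $|\cB_1|=\Omega_d(n^s)$, which follows from $|\cF_1|\le|\cB_1|\cdot O_d(n^{d-s})$, and with your bound on $|\cU|$. This gives $M=O_d((K+1)n^{s-1})$, hence $|\cF\setminus\cF^*|\le|\cF_{\ge2}|+M\cdot O_d(n^{d-s})=O_d((K+1)n^{d-1})$.

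In these terms the ``product'' lower bound you wanted is automatic. The quantity $|\cB_1(x)|\,|\cB_1(y)|$ literally counts pairs, and each pair contributes its own $\Theta_d(n^{d-2s})$ members of $\cU$ with bounded overlap. So no correlation inequality, and no error bookkeeping over pairs $(x,y)$, is needed.
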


\begin{proof}
    Note that by adjusting the implied constant in $O_d$, we may assume that $K \le c n$ for an arbitrarily small fixed constant $c>0$ depending on $d$. 

    First note that by \cref{claim:F2-size}, we have $|\cF_{\ge 2}| = O_d(n^{d-1})$, so we may focus on $\cF_1$. We first prove a lower bound on $|\cB_1|$ in the following claim.

    \begin{claim}\label{claim:B1-size-lb}
        We have $|\cB_1|=\Omega_d(n^s)$.
    \end{claim}

    \begin{proof}
    From \cref{claim:FB-size}, we have
    \[|\cF_1|\le \sum_{B\in \cB_1}|\cF_B|\le |\cB_1|\cdot O_d(n^{d-s}).\]
    By rearranging, we get $|\cB_1|=\Omega_d\left(|\cF_1|n^{s-d}\right)$. Recall that we have $|\cF_1|\ge |\cF| - |\cF_{\ge 2}|$, $|\cF_2|= O_d(n^{d-1})$, and $|\cF| \ge \binom{n-1}{d} - Kn^{d-1}=\Omega_d(n^d)$, where we use the assumption $K \le cn$. It follows that $|\cF_1| = \Omega_d(n^d)$, and hence $|\cB_1|=\Omega_d(n^s)$.
    \end{proof}
    
    For $x\in[n]$, let $\cB_{1}(x) \eqdef \{B\in \cB_1\mid x_B = x\}$. Define $x_0 \eqdef \arg\max_{x\in [n]}|\cB_{1}(x)|$ and let $M\ge 0$ be such that $|\cB_{1}(x_0)| = |\cB_1| - M$. Define the star family
    \[
    \cF^* = \left\{F^*\in \binom{[n]}{d+1}\lmid x_0\in F^*\right\}.
    \]
    Note that we have
    \[|\cF\triangle \cF^*| = |\cF\setminus \cF^*| + |\cF^*\setminus \cF|.\]
    By assumption, we have 
    \[|\cF^*\setminus \cF| = |\cF^*| - |\cF^*\cap \cF|\le (|\cF^*|-|\cF|) - |\cF \setminus \cF^*|\le Kn^{d-1} + |\cF\setminus \cF^*|.\]
    Thus it suffices to bound $|\cF\setminus \cF^*|$. By definition and \cref{claim:FB-size}, we have 
    \[|\cF\setminus \cF^*|\le |\cF_{\ge 2}| + \sum_{B\in \cB_1\setminus \cB_{1}(x_0)}|\cF_B|\le O_d(n^{d-1}) + M\cdot O(n^{d-s}).\]
    So now our goal is to upper bound $M$. We first bound it in terms of $\abs{\cU}$.
\begin{claim}\label{lem:UM}
    We have 
    \[\abs{\cU}=\Omega_d\left(n^{d-2s}\abs{\cB_1}\left(M-2\binom{n-1}{s-1}\right)\right).\]
\end{claim}
\begin{proof}
    For any $(B, B') \in \cP$, since $s\le d/2$, we have $|\cU(B, B')| \ge \binom{n-2s-2}{d-2s}$. Note that every $E \in {[n] \choose d}$ can contain $B\cup B'$ for at most $\binom{d}{s}\cdot \binom{d-s}{s}$ pairs $(B, B') \in \cP$. Thus, we have
    \[
        \left|\cU\right| \ge \binom{d}{s}^{-1} \binom{d-s}{s}^{-1}\sum_{(B, B')\in \cP} |\cU(B, B')|
        \ge \binom{d}{s}^{-1} \binom{d-s}{s}^{-1} \binom{n-2s-2}{d-2s}|\cP|
        = \Omega_d(n^{d-2s}\abs{\cP}).
    \]

    Now, we lower bound $\abs{\cP}$. 
    By the inclusion-exclusion principle, we know that $|\cP|$ is at least
    \begin{align*}
        |\left\{(B, B')\in \cB_1\times \cB_1 \mid x_{B} \neq x_{B'}\right\}| - |\left\{(B, B')\in \cB_1\times \cB_1 \mid x_{B} \in B'\right\}| - |\left\{(B, B')\in \cB_1\times \cB_1 \mid x_{B'} \in B\right\}|.
    \end{align*}
    Note that $|\left\{(B, B')\in \cB_1\times \cB_1 \mid x_{B} \neq x_{B'}\right\}|\geq |\cB_1|M$ since there are at least $M$ choices for $B'$ once $B$ is chosen (see the definition of $M$). On the other hand, we know that $|\left\{(B, B')\in \cB_1\times \cB_1 \mid x_{B} \in B'\right\}|\leq\nobreak |\cB_1|\binom{n-1}{s-1}$ since there are at most $\binom{n-1}{s-1}$ choices for $B'$. Similarly, we have $|\left\{(B, B')\in \cB_1\times \cB_1 \mid x_{B'} \in B\right\}|\leq |\cB_1|\binom{n-1}{s-1}$. Therefore, we can conclude that
    \[|\cP|\ge |\cB_1| M - 2 |\cB_1| {n-1\choose s-1}.\]
    This completes the proof.
\end{proof}

    Now, we are ready to upper bound $M$.
    \begin{claim}\label{claim:M-ub}
        We have $M= O_d((K+1)n^{s-1})$.
    \end{claim}

    \begin{proof}
    If $M\leq 2 {n-1\choose s-1}$, then the claim is trivially true. Otherwise, combining \cref{claim:B1-size-lb,lem:UM}, we get
    \[\left|\cU\right| =\Omega_d\left(n^{d-s}\left(M-2 {n-1\choose s-1}\right)\right)=\Omega_d(n^{d-s}M)-O_d(n^{d-1}).\] 
    Now by \cref{cor:injection-ineq}, we have
    \[\left|\cU\right| \leq \binom{n}{d}-|\cF_1|\leq \binom{n}{d}-|\cF|+|\cF_{\geq 2}|=\binom{n-1}{d}-|\cF|+O_d(n^{d-1}). \]
    Combining with the assumption that $|\cF| \ge {n-1 \choose d} - K n^{d-1}$, we can conclude that 
    \[K n^{d-1}=\Omega_d(n^{d-s}M)-O_d(n^{d-1}).\]
    After rearranging, we get $M = O_d((K+1)n^{s-1})$.
    \end{proof}

    Thus by \cref{claim:M-ub}, we can conclude that 
    \[|\cF\triangle \cF^*|\le Kn^{d-1} + 2|\cF\setminus \cF^*|\le Kn^{d-1} + O_d(n^{d-1}) + O_d((K+1)n^{d-1}) = O_d((K+1)n^{d-1}),\]
    and \cref{prop:star-approx} follows.
\end{proof}

\subsection{Stability analysis}\label{subsec:stability}
We are now ready to perform the stability analysis and finish the proof of \cref{thm:d>2s}.

\begin{proof}[Proof of \cref{thm:d>2s}]
Let $\cF\subseteq \binom{[n]}{d+1}$ be an $s$-witness family with $1\le s\le d/2$ and suppose $|\cF|\ge \binom{n-1}{d}$. Then by \cref{prop:star-approx}, there exists $x_0\in [n]$ such that the star family centered at $x_0$ defined as 
\[\cF^* = \left\{F^*\in \binom{[n]}{d+1}\lmid x_0\in F^*\right\}\]
satisfying $|\cF\triangle \cF^*| = O_d(n^{d-1})$. Let $L = |\cF^*\setminus \cF|$ and note that $L= O_d(n^{d-1})$. Our goal is to show that $L=0$.

We will give estimates regarding the following sets:
\begin{enumerate}
    \item $\cS_1 \eqdef \{B\in \cB \mid x_0\in B\}$;
    \item $\cS_2 \eqdef \{B\in \cB_{\ge 2}\mid x_0\not\in B\}$;
    \item $\cB_1(x_0) \eqdef \{B\in \cB_1\mid x_B=x_0\}$;
    \item $\cB_1'\eqdef \{B\in \cB_1\mid x_B\neq x_0,\, x_0\notin B\}$.
\end{enumerate}

\begin{claim}\label{lem:1}
    We have
    \[\sum_{B \in \cS_1} |\cF_B| = O_d(L/n).\]
\end{claim}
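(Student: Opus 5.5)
The approach is to charge each witness $B\in\cS_1$ to many sets of the star $\cF^*$ that are forced to be missing from $\cF$, and then to use \cref{claim:FB-size} to bound $|\cF_B|$.

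\emph{Step 1: one missing set per choice of $A$.} Fix $B\in\cS_1$. Since $B\in\cB$, some $F\in\cF$ has $B_F=B$, so by \cref{item:witness} of \cref{lem:modeling} the family $\cA_B$ is nonempty. Pick any $A\in\cA_B$; then $|A|\le d+1-s$ and $A\cap B=\varnothing$. Let $G\in\binom{[n]}{d+1}$ satisfy $B\subseteq G$ and $G\cap A=\varnothing$. Because $x_0\in B\subseteq G$, we have $G\in\cF^*$. On the other hand, $G\notin\cF$: by \cref{item:contain} of \cref{lem:modeling}, every member of $\cF$ containing $B$ meets every set of $\cA_B$, in particular $A$. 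Hence every such $G$ lies in $\cF^*\setminus\cF$.

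\emph{Step 2: counting these sets.} The number of such $G$ is
\[
\binom{n-s-|A|}{d+1-s}\ \ge\ \binom{n-d-1}{d+1-s}=\Omega_d\lpr{n^{d+1-s}}.
\]

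\emph{Step 3: double counting over $\cS_1$.} A fixed set $G\in\cF^*\setminus\cF$ can arise in this way from at most $\binom{d+1}{s}$ sets $B$, namely its $s$-subsets. Summing Step 2 over $B\in\cS_1$ therefore gives
\[
|\cS_1|\cdot\Omega_d\lpr{n^{d+1-s}}\ \le\ \binom{d+1}{s}\,L,
\qquad\text{so}\qquad |\cS_1|=O_d\lpr{L\,n^{s-d-1}}.
\]

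\emph{Step 4: bounding each $|\cF_B|$.} Every $B\in\cB$ has $\alpha_B\ge1$, so \cref{claim:FB-size} gives $|\cF_B|=O_d(n^{d-s})$. Combining with Step 3,
\[
\sum_{B\in\cS_1}|\cF_B|\ \le\ |\cS_1|\cdot O_d\lpr{n^{d-s}}=O_d(L/n),
\]
which is the claim.

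The only delicate point is Step 1: we must verify that the sets $G$ really are excluded from $\cF$ even when $B$ is not the chosen witness of the sets of $\cF$ containing it. This is exactly what \cref{item:contain} of \cref{lem:modeling} guarantees, and it is why we use $\cA_B$ rather than the individual sets $F\setminus B$. The remaining steps are routine counting.
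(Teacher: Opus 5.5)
Your proof is correct and follows essentially the same route as the paper: lower-bound the number of sets in $\cF^*\setminus\cF$ containing each $B\in\cS_1$ by $\Omega_d(n^{d+1-s})$, double count to get $|\cS_1|=O_d(Ln^{s-d-1})$, and multiply by $|\cF_B|=O_d(n^{d-s})$ from \cref{claim:FB-size}. Your Step 1 is actually a cleaner justification than the paper's. You count sets containing $B$ and avoiding a fixed $A\in\cA_B$, which are excluded by \cref{item:contain} of \cref{lem:modeling}. The paper instead asserts that every $F\in\cF\cap\cF^*$ containing $B$ contains some $A\cup B$, but \cref{lem:modeling} only guarantees that when $B_F=B$.
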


\begin{proof}
    Fix $B\in \cS_1$. We first lower bound the number of sets $F\in \cF^*\setminus \cF$ with $B\subseteq \cF$.
    For any $A\in\cA_B$, we know that the number of sets $F\in \cF^*$ that contain $A\cup B$ is at most $\binom{n-s-\abs{A}}{d+1-s-\abs{A}}=O_d(n^{d-s})$. On the other hand, there are $\binom{n-s}{d+1-s}$ sets in $\cF^*$ that contain $B$. Since any set $F\in \cF^*\cap \cF$ with $B\subseteq \cF$ must contain $A\cup B$ for some $A\in\cA_B$, we know that the number of $F\in \cF^*\setminus \cF$ with $B\subseteq \cF$ is at least $\binom{n-s}{d-s+1}-O_d(n^{d-s})\abs{\cA_B}=\Omega_d(n^{d-s+1})$.
    
    Since every $F^* \in \cF\setminus \cF$ contains at most ${d\choose s-1}$ sets $B \in \cS_1$, we obtain
    \[L = |\cF^* \setminus \cF| \ge \binom{d}{s-1}^{-1}\cdot \Omega_d(n^{d-s+1})\cdot |\cS_1| .\]
    Thus, we have $|\cS_1|=O_d(Ln^{s-d-1})$. On the other hand, by \cref{claim:FB-size}, for any $B\in\cS_1$, we know that $|\cF_B| = O_d(n^{d+1-s-\alpha_B}) = O_d(n^{d-s})$, so
    \[\sum_{B \in \cS_1} |\cF_B| = O_d(n^{d-s}) \cdot|\cS_1|=  O_d(L/n).\qedhere\]
\end{proof}

\begin{claim}\label{lem:2}
    We have
    \[\sum_{B \in \cS_2} |\cF_B| = O_d(L/n).\]
\end{claim}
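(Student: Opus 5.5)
The plan is to mimic the proof of \cref{lem:1}. For each $B\in\cS_2$, I would show that many sets of the star $\cF^*$ containing $B$ are missing from $\cF$. Then I would double count these missing sets against $L=|\cF^*\setminus\cF|$. Two facts drive the count. First, since $\alpha_B\ge 2$, \cref{claim:FB-size} gives $|\cF_B|=O_d(n^{d-1-s})$ for every $B\in\cS_2$. Second, every $F^*\in\cF^*\setminus\cF$ contains at most $\binom{d+1}{s}$ sets $B$. So it suffices to show that each $B\in\cS_2$ forces $\Omega_d(n^{d-s})$ sets of $\cF^*\setminus\cF$ that contain $B$. That yields $|\cS_2|=O_d(Ln^{s-d})$, and hence $\sum_{B\in\cS_2}|\cF_B|=O_d(n^{d-1-s})\cdot|\cS_2|=O_d(L/n)$.

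The sets of $\cF^*$ containing $B$ are exactly the sets $B\cup\{x_0\}\cup R$ with $R\in\binom{[n]\setminus(B\cup\{x_0\})}{d-s}$; there are $\binom{n-s-1}{d-s}=\Omega_d(n^{d-s})$ of them. Any such set lying in $\cF$ contains $B$, so by \cref{item:contain} of \cref{lem:modeling} it meets every $A\in\cA_B$. Suppose first that some $A\in\cA_B$ avoids $x_0$. Then the set must contain one of the at most $d+1-s$ elements of $A$ besides $x_0$. Only $O_d(n^{d-s-1})$ sets of $\cF^*$ containing $B$ do this. Hence all but $O_d(n^{d-s-1})$ of the $\Omega_d(n^{d-s})$ star sets through $B$ lie in $\cF^*\setminus\cF$, and the double count above finishes this case.

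The main obstacle is the remaining case, in which every $A\in\cA_B$ contains $x_0$. Then property~(3) excludes nothing from the star, so a different argument is needed. I would first try to bound $\sum|\cF_B|$ directly over such $B$, using the global counting from \cref{cor:injection-ineq}. Every $F\in\cF_B$ lies in $\cF_{\ge2}$ and contains $B\cup\{x_0,y\}$ for some $y\ne x_0$. The idea is that such $F$ are not in $\cF_1$, so the corresponding $d$-sets $F\setminus\{x_0\}$, which contain $B$, are not in the image $\cE$ unless $\cF_1$ also covers them. On the other hand, $|\cF|\ge\binom{n-1}{d}$ together with $|\cF_1|+|\cU|\le\binom nd$ forces $\cE$ to cover almost all $d$-sets avoiding $x_0$, up to $O(L)$ exceptions. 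So each $F\in\cF_B$ either creates a missing star set, since $\cF_1$ cannot also use $F\setminus\{x_0\}$ via centre $x_0$, or is counted against the $O(L)$ slack. With multiplicity bounded by $\binom{d+1}{s}$, this should give $O_d(L)$, and an extra factor $1/n$ would come from the additional forced element $y$ in each such $F$.

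I expect making this last step precise to be the hard part. The fallback is to show that in this case $B$ can be reassigned to $\cB_1(x_0)$, which would remove it from $\cS_2$ entirely.
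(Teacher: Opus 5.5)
Your first case is correct, and it is actually the sound way to run the paper's count. The paper's proof asserts that every $F\in\cF^*\cap\cF$ with $B\subseteq F$ contains $A\cup B$ for some $A\in\cA_B$. That is only guaranteed when $B_F=B$ (item~(2) of \cref{lem:modeling}). For an arbitrary $F\supseteq B$ one has only item~(3), which is exactly what you use.

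The genuine gap is your second case, where every $A\in\cA_B$ contains $x_0$. Neither the global-counting sketch nor the reassignment fallback is an argument. The sets $\cF_1$ and $\cF_{\ge 2}$ may overlap, so ``such $F$ are not in $\cF_1$'' is unjustified. Replacing $\cA_B$ by $\{\{x_0\}\}$ would need every $F\in\cF$ containing $B$ to contain $x_0$, which item~(3) does not give. The paper does not treat this case either; it is hidden inside the assertion above.

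Moreover, this case cannot be closed, because the claim is false there. Let $\cF$ be the full star at $x_0$, so $|\cF|=\binom{n-1}{d}$ and $L=0$. The star provided by \cref{prop:star-approx} must be this one, since two distinct stars differ in $2\binom{n-2}{d}$ sets. Every $s$-subset of $F\setminus\{x_0\}$ is a valid witness for $F$, and each $F$ has $\binom{d}{s}\ge 2$ of them. So you can fix $F_0\in\cF$ and $B^\circ\subseteq F_0\setminus\{x_0\}$, and choose the witnesses so that $B^\circ$ is the witness of $F_0$ alone. Then $\cA_{B^\circ}=\{F_0\setminus B^\circ\}$, since there is no sunflower to contract. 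Hence $\alpha_{B^\circ}=d+1-s\ge 2$, $B^\circ\in\cS_2$ and $\cF_{B^\circ}=\{F_0\}$. Thus $\sum_{B\in\cS_2}|\cF_B|\ge 1$ while $O_d(L/n)=0$.

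What your first case does prove is the weaker bound $\sum_{B\in\cS_2}|\cF_B\setminus\cF^*|=O_d(L/n)$: if $\cF_B\not\subseteq\cF^*$, some $A\in\cA_B$ has $x_0\notin A$. This suffices for the final estimate of $|\cF\setminus\cF^*|$. The theorem survives if the use of the claim in \eqref{eq:finalF} is rerouted as follows. The families $\varphi(\cF_1'\setminus\cF^*)$ and $\cU'$ are disjoint families of $d$-sets avoiding $x_0$ (see \cref{cor:inj2}). Under $E\mapsto E\cup\{x_0\}$ they map injectively into $\cF^*\setminus\cF$, by item~(1) of \cref{lem:injection} for the first family and item~(3) of \cref{lem:modeling} applied to $B'$ for the second. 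So $|\cF_1'\setminus\cF^*|+|\cU'|\le L$. Combined with \cref{lem:UB1star} and \cref{claim:FB-size}, this gives $|\cF_1'\setminus\cF^*|\le(1-\delta)L$ for some $\delta=\delta_d>0$. Then $L\le|\cF\setminus\cF^*|\le(1-\delta)L+O_d(L/n)$ forces $L=0$.
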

\begin{proof}
    The proof is similar to the one of \cref{lem:1}. Fix $B \in \cS_2$. We first lower bound the number of sets $F\in \cF^*\setminus \cF$ with $B\subseteq \cF$.
    For any $A\in\cA_B$, we know that $\abs{A}\geq 2$ since $B\in\cB_{\geq 2}$. By a direct calculation, the number of sets $F\in \cF^*$ that contains $A\cup B$ is at most $\binom{n-s-1-\abs{A}}{d+1-s-\abs{A}}=O_d(n^{d-s-1})$ (equality is achieved when $x_0\in A$ and $\abs{A}=2$). On the other hand, there are $\binom{n-s-1}{d-s}$ sets in $\cF^*$ that contains $B$. Since any set $F\in \cF^*\cap \cF$ with $B\subseteq \cF$ must contain $A\cup B$ for some $A\in\cA_B$, we know that the number of $F\in \cF^*\setminus \cF$ with $B\subseteq \cF$ is at least $\binom{n-s-1}{d-s}-O_d(n^{d-s-1})\abs{\cA_B}=\Omega_d(n^{d-s})$.

    Since every $F^* \in \cF\setminus \cF$ contains at most ${d+1\choose s}$ sets $B \in \cS_2$, we obtain
    \[L = |\cF^* \setminus \cF| \ge \binom{d+1}{s}^{-1}\cdot \Omega_d(n^{d-s})\cdot |\cS_2| .\]
    Thus, we have $|\cS_2|=O_d(Ln^{s-d})$. On the other hand, by \cref{claim:FB-size}, for any $B\in \cS_2$, we know that $|\cF_B| = O_d(n^{d+1-s-\alpha_B}) = O_d(n^{d-s-1})$, so
    \[\sum_{B \in \cS_2} |\cF_B| = O_d(n^{d-s-1}) \cdot|\cS_2|=  O_d(L/n).\qedhere\]
\end{proof}

Since we are focusing on the sets $F\in \cF$ that are not captured by $\cF^*$, we need restrict our injection to the sets in $[n]\setminus\{x_0\}$. Thus, we define
\begin{alignat*}{2}
    &\cF_1(x_0)\eqdef \bigcup_{B\in\cB_1(x_0)}\cF_B, \quad\quad &\cF_1'\eqdef& \bigcup_{B\in\cB_1'}\cF_B,\\
    &\cP'\eqdef\cP\cap (\cB_1(x_0)\times \cB_1'),&\quad 
    \cU'\eqdef&\bigcup_{(B,B')\in \cP'}\cU(B,B').
\end{alignat*}
Note that $\cU'\subseteq \binom{[n]\setminus\{x_0\}}{d}$ since any set $E\in\cU(B,B')$ does not contain $x_0=x_B$ if $B\in \cB_1(x_0)$. Thus, the injection $\varphi$ we defined right after \cref{lem:injection} gives the following corollary.
\begin{corollary}\label{cor:inj2}
    We have
    \[\abs{\cF_1(x_0)}+\abs{\cF_1'\setminus \cF^*}+\abs{\cU'}\leq \binom{n-1}{d}.\]
\end{corollary}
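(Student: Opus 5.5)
We need to prove Corollary \ref{cor:inj2}, i.e.
\[|\cF_1(x_0)|+|\cF_1'\setminus\cF^*|+|\cU'|\le\binom{n-1}{d}.\]
The plan is to exhibit three pairwise disjoint subsets of $\binom{[n]\setminus\{x_0\}}{d}$ of the corresponding sizes: the images $\phi(\cF_1(x_0))$ and $\phi(\cF_1'\setminus\cF^*)$, together with $\cU'$. Here $\phi(F)=F\setminus\{x_B\}$ is the injection on $\cF_1$ coming from \cref{lem:injection}.

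First we check that the images land in $\binom{[n]\setminus\{x_0\}}{d}$.
\begin{itemize}
\item For $F\in\cF_B$ with $B\in\cB_1(x_0)$ we have $x_B=x_0$, so $\phi(F)=F\setminus\{x_0\}$ avoids $x_0$.
\item For $F\in\cF_1'\setminus\cF^*$ we have $x_0\notin F$, hence $x_0\notin\phi(F)$.
\item We already noted that $\cU'$ avoids $x_0$, since $E\in\cU(B,B')$ with $B\in\cB_1(x_0)$ excludes $x_B=x_0$.
\end{itemize}
Note also that $\phi$ is well defined on $\cF_1$. If $F\in\cF_B\cap\cF_{B'}$ with $x_B\ne x_{B'}$, then \cref{item:injective:1} would force $F=(F\setminus\{x_B\})\cup\{x_B\}$ and also $x_{B'}\in F$. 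So the value $F\setminus\{x_B\}$ could in principle depend on the choice of $B$. To avoid this, fix any choice of $B$ for each $F$; injectivity then follows from \cref{item:injective:1}, since $E=\phi(F)\in\cE_B$ and $E\subseteq F$ give $F=E\cup\{x_B\}$. More precisely, if $\phi(F)=\phi(F')=E$ with $E\in\cE_B$, then applying \cref{item:injective:1} to $F'\supseteq E$ yields $F'=E\cup\{x_B\}=F$.

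Disjointness has two parts. The image of $\cF_1$ is contained in $\cE$, and \cref{item:injective:3} shows $\cU(B,B')\cap\cE=\varnothing$ whenever $x_B\ne x_{B'}$. Pairs in $\cP'$ satisfy $x_B=x_0\ne x_{B'}$, so $\cU'\cap\cE=\varnothing$. Finally, $\phi(\cF_1(x_0))$ and $\phi(\cF_1'\setminus\cF^*)$ are disjoint because $\phi$ is injective on $\cF_1$ and the two preimage sets are disjoint: every set in $\cF_1(x_0)$ contains $x_0$ (by \cref{item:contain} with $\cA_B=\{\{x_0\}\}$), while sets in $\cF_1'\setminus\cF^*$ do not.

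Summing the three sizes inside $\binom{[n]\setminus\{x_0\}}{d}$ gives the bound. The only delicate point is the well-definedness of $\phi$ when a set lies in several $\cF_B$. Fixing a choice of $B$ per set handles it, as long as the sets in $\cF_1(x_0)$ are assigned some $B\in\cB_1(x_0)$, which is possible by definition.
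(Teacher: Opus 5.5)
Your proof is correct and follows the paper's own argument: $\phi(\cF_1(x_0))$, $\phi(\cF_1'\setminus\cF^*)$ and $\cU'$ are pairwise disjoint subfamilies of $\binom{[n]\setminus\{x_0\}}{d}$ by \cref{lem:injection}. You also fill in a detail the paper leaves implicit: for each $F\in\cF_1(x_0)$ you fix some $B\in\cB_1(x_0)$ with $F\in\cF_B$, so that $\phi(F)$ avoids $x_0$, and you note that injectivity via \cref{item:injective:1} holds for any such choice of $B$.
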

\begin{proof}
    Note that $\varphi(\cF_1(x_0))$, $\varphi(\cF_1'\setminus \cF^*)$, and $\cU'$ are set families in $\binom{[n]\setminus\{x_0\}}{d}$ and they are pairwise disjoint by \cref{lem:injection}.
\end{proof}

We also need the following variant of \cref{lem:UM}.
\begin{lemma}\label{lem:UB1star}
    We have $\abs{\cU'}=\Omega_d(n^{d-s}\abs{\cB_1'})$.
\end{lemma}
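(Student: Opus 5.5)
We follow the double counting in \cref{lem:UM}, with $\cP'$ in place of $\cP$, and we aim for the bound $|\cP'|=\Omega_d(n^s|\cB_1'|)$. Once this holds, the same argument finishes. Each $\cU(B,B')$ with $(B,B')\in\cP'$ has size at least $\binom{n-2s-2}{d-2s}=\Omega_d(n^{d-2s})$, using $s\le d/2$. Each $E\in\binom{[n]}{d}$ contains $B\cup B'$ for at most $\binom{d}{s}\binom{d-s}{s}=O_d(1)$ pairs. Together these give
\[
|\cU'|\ge \binom{d}{s}^{-1}\binom{d-s}{s}^{-1}\binom{n-2s-2}{d-2s}|\cP'|=\Omega_d(n^{d-2s}|\cP'|),
\]
which becomes $\Omega_d(n^{d-s}|\cB_1'|)$ once the bound on $|\cP'|$ is in place.

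To bound $|\cP'|$ from below, fix $B'\in\cB_1'$ and count the $B\in\cB_1(x_0)$ with $(B,B')\in\cP$. The conditions are:
\begin{itemize}
\item $x_B=x_0\neq x_{B'}$, which holds by definition of $\cB_1'$;
\item $x_B=x_0\notin B'$, which also holds by definition of $\cB_1'$;
\item $x_{B'}\notin B$, which is the only real constraint.
\end{itemize}
The number of $B\in\binom{[n]}{s}$ containing $x_{B'}$ is at most $\binom{n-1}{s-1}=O(n^{s-1})$. Hence the number of valid $B$ is at least $|\cB_1(x_0)|-\binom{n-1}{s-1}$, and so
\[
|\cP'|\ge |\cB_1'|\left(|\cB_1(x_0)|-\binom{n-1}{s-1}\right).
\]

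It therefore suffices to show $|\cB_1(x_0)|=\Omega_d(n^s)$; this is the main point to verify. Here $x_0$ is the star center from \cref{prop:star-approx} used in the stability step, so the argument runs through $\cF^*$. We have $|\cF\cap\cF^*|\ge|\cF^*|-L=\Omega_d(n^d)$. Every $F\in\cF\cap\cF^*$ lies in some $\cF_B$ with $B$ in one of three classes:
\begin{itemize}
\item $B\in\cS_1$: by \cref{lem:1} these contribute $O_d(L/n)=O_d(n^{d-2})$ sets;
\item $B\in\cS_2$: by \cref{lem:2} these contribute $O_d(L/n)=O_d(n^{d-2})$ sets;
\item $B\in\cB_1$ with $x_0\notin B$: these split into $\cB_1(x_0)$ and $\cB_1'$.
\end{itemize}

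For $B\in\cB_1'$, every $F\in\cF_B$ contains $x_B\neq x_0$. Such an $F$ lies in $\cF^*$ only if it also contains $x_0$, so it has $s+2$ prescribed elements. Thus $|\cF_B\cap\cF^*|=O_d(n^{d-s-1})$, and these sets contribute $O_d(n^{d-s-1}|\cB_1'|)$ in total.

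If $|\cB_1'|\ge cn^{s}$ for a small constant $c$, the lemma follows immediately. Indeed, in that case $|\cU'|$ is at least a constant times $n^{d-s}|\cB_1'|$, since $n^{d}$ is at least a constant times $n^{d-s}|\cB_1'|$. We get this provided $|\cU'|=\Omega_d(n^d)$, which in turn needs $|\cB_1(x_0)|=\Omega_d(n^s)$ again. So we instead bound $|\cB_1'|$ directly. The sets in $\cF_1'$ are disjoint from the star up to $O_d(n^{d-s-1})$ each, and $|\cF\setminus\cF^*|=O_d(n^{d-1})$.

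Alternatively, and more cleanly, $\cF_B$ for $B\in\cB_1'$ contains the sets $B\cup\{x_B\}\cup C$, which are mostly outside $\cF^*$. This is not guaranteed, however, since $\cF_B$ may be small.

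The safest route is the following. Since $|\cB_1'|\le\binom{n}{s}$, the contribution $O_d(n^{d-s-1}|\cB_1'|)$ is $O_d(n^{d-1})$. Hence the sets of $\cF\cap\cF^*$ coming from $\cB_1(x_0)$ number at least $\Omega_d(n^d)-O_d(n^{d-1})$. Each $|\cF_B|=O_d(n^{d-s})$ by \cref{claim:FB-size}, so $|\cB_1(x_0)|=\Omega_d(n^s)$. For large $n$ this dominates $\binom{n-1}{s-1}$, and we conclude $|\cP'|=\Omega_d(n^s|\cB_1'|)$, which gives the lemma.
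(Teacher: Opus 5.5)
Your proof is correct. Its outer structure matches the paper's: the double count $|\cU'|=\Omega_d(n^{d-2s}|\cP'|)$, and the per-$B'$ count showing that only $x_{B'}\notin B$ is a genuine constraint.

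The difference is how you get the key input $|\cB_1(x_0)|=\Omega_d(n^s)$.
\begin{itemize}
\item The paper takes it in one line from the proof of \cref{prop:star-approx}. There $x_0$ is chosen as the most popular value of $x_B$, so $|\cB_1(x_0)|=|\cB_1|-M$. Then \cref{claim:B1-size-lb} gives $|\cB_1|=\Omega_d(n^s)$, and \cref{claim:M-ub} at $K=0$ gives $M=O_d(n^{s-1})$.
\item You instead cover $\cF\cap\cF^*$, which has size $\binom{n-1}{d}-L=\Omega_d(n^d)$. The parts coming from $\cS_1$, $\cS_2$ and $\cB_1'$ total $O_d(n^{d-1})$. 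Each $\cF_B$ with $B\in\cB_1(x_0)$ has size $O_d(n^{d-s})$, which forces $|\cB_1(x_0)|=\Omega_d(n^s)$.
\end{itemize}

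The paper's route is shorter given the machinery already in place. However, it implicitly identifies the center $x_0$ in \cref{subsec:stability} with the maximizer chosen inside the proof of \cref{prop:star-approx}, which the proposition's statement does not record. Your route needs only $|\cF^*\setminus\cF|=o(n^d)$ and \cref{claim:FB-size}. So it works for the center of any approximating star and keeps the stability section independent of the proposition's internals.

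You don't need \cref{lem:1,lem:2} here. The crude bounds $\binom{n-1}{s-1}\cdot O_d(n^{d-s})$ for $\cS_1$ and $\binom{n}{s}\cdot O_d(n^{d-s-1})$ for $\cS_2$ already give $O_d(n^{d-1})$.

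Delete the two middle paragraphs: the case $|\cB_1'|\ge cn^s$ and the \emph{alternatively, and more cleanly} remark. They are dead ends, and the first is circular, as you note. Your final argument does not use them.
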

\begin{proof}
    Similar to \cref{lem:UM}, we know that $\abs{\cU'}=\Omega_d(n^{d-2s}\abs{\cP'})$. Thus, we need to lower bound $\abs{\cP'}$.
    
    Note that for each fixed $B'\in \cB_1'$, there are at most $\binom{n-2}{s-1}$ possible $B\in \cB_1(x_0)$ such that $x_{B'}\in B$. Thus, there are at least $\abs{\cB_1(x_0)}-\binom{n-2}{s-1}$ pairs of the form $(B,B')\in \cP'$ for each $B'\in \cB_1'$. Recall that we defined $M\ge 0$ such that $|\cB_{1}(x_0)| = |\cB_1| - M$. From \cref{claim:B1-size-lb,claim:M-ub} with $K=0$, we have
    \[\abs{\cB_1(x_0)}-\binom{n-2}{s-1}=\abs{\cB_1}-M-\binom{n-2}{s-1}=\Omega_d(n^s).\]
    Therefore, $\abs{\cP'}\geq \Omega_d(n^s\abs{\cB_1'})$, and the lemma follows.
\end{proof}

Now we are ready to finish the proof of \cref{thm:d>2s}. Note that $\cS_1\cup\cS_2\cup\cB_1(x_0)\cup\cB_1'=\cB$, and hence
\[\abs{\cF}\leq \sum_{B \in \cS_1} |\cF_B|+\sum_{B \in \cS_2} |\cF_B|+\abs{\cF_1(x_0)}+\abs{\cF_1'}.\]
From \cref{lem:1,lem:2}, we know that the first two terms in the right hand side are $O_d(L/n)$. Furthermore, we can write $\abs{\cF_1'}=\abs{\cF_1'\setminus \cF^*}+\abs{\cF_1'\cap \cF^*}$. Together with \cref{cor:inj2}, we get
\begin{align}
    \abs{\cF} &\leq  O_d(L/n)+ \abs{\cF_1(x_0)} + \abs{\cF_1' \setminus \cF^*} + \abs{\cF_1'\cap \cF^*}  \nonumber \\ 
    &\leq O_d(L/n)+\binom{n-1}{d}-\abs{\cU'}+\abs{\cF_1'\cap \cF^*}.\label{eq:finalF}
\end{align}
For any fixed $B\in \cB_1'$, we know that $|\cF_B\cap \cF^*|\leq \binom{n-s-2}{d-s-1}$ since any set $F\in \cF_B\cap \cF^*$ must contain $B\cup\{x_0,x_B\}$. Therefore, we have \begin{align}
    \abs{\cF_1'\cap \cF^*}=O_d(n^{d-s-1}\abs{\cB_1'}).\label{eq:F1'capF*}
\end{align}
Combining \cref{lem:UB1star,eq:finalF,eq:F1'capF*}, we have
\[\abs{\cF}\leq \binom{n-1}{d}+O_d(L/n)-\Omega_d(n^{d-s}\abs{\cB_1'}).\]
Recall that we assumed $\abs{\cF}\geq \binom{n-1}{d}$. It follows that $\abs{\cB_1'}= O_d(n^{s-d-1}L)$

Now, we bound $\abs{\cF\setminus \cF^*}$.
Note that if $F\in\cF\setminus \cF^*$, then $F\in\cF_B$ for some $B\in \cS_2\cup \cB_1'$. Therefore, 
\[\abs{\cF\setminus \cF^*}\leq \sum_{B \in \cS_2} |\cF_B| +\abs{\cF_1'\setminus \cF^*}\leq O_d(L/n)+O_d(n^{d-s}\abs{\cB_1'})=O_d(L/n),\]
where the second inequality follows from \cref{lem:2,claim:FB-size}. 

However, we have
\[\abs{\cF\setminus \cF^*}=\abs{\cF}-\abs{\cF^*}+\abs{\cF^*\setminus\cF}\geq \binom{n-1}{d}-\binom{n-1}{d}+L=L.\]
Therefore, $O_d(L/n)\geq L$, and hence $L=\abs{\cF^*\setminus\cF}=0$ when $n$ is sufficiently large. This implies that $\cF^*\subseteq \cF$, and it is clear that $\cF^*$ is a maximal $s$-witness family. This completes the proof.
\end{proof}

\section{Non-star constructions}\label{sec:construction}
We first prove \cref{thm:two-star} in \cref{subsec:two_star}, and then prove \cref{thm:more-star} in \cref{subsec:more_star}
\subsection{A non-star construction for $ d/2<s\leq d-1$}\label{subsec:two_star}
Now we give a construction that proves \cref{thm:two-star}.
\begin{proof}[Proof of \cref{thm:two-star}]
    Fix any integer $m\in [s+2,n-1]$. Consider the set family $\cF\eqdef\cA_1\cup\cA_2\cup\cA_{12}$ defined as follows:
    \begin{align*}
        \cA_1\eqdef&\left\{\{1\}\cup A\lmid A\in\binom{[3,n]}{d},\,\abs{A\cap [3,m]}\geq s\right\},\\
    \cA_2\eqdef&\left\{\{2\}\cup A\lmid A\in\binom{[3,n]}{d},\,\abs{A\cap [3,m]}< s\right\},\\
    \cA_{12}\eqdef&\left\{\{1,2\}\cup A\lmid A\in\binom{[3,n]}{d-1}\right\},
    \end{align*}
    It is clear that $\cF$ is not a star. Moreover, we have
    \[|\cF| = \binom{n-2}{d} + \binom{n-2}{d-1} = \binom{n-1}{d}.\]

   We choose the witness $B_F$ for each $F\in \cF$ as follows.
    \begin{itemize}
        \item $F\in \cA_{12}$: $F = \{1,2\}\cup A$ for some $A\in \binom{[3,n]}{d-1}$. We set $B_F\subseteq A$ to be any arbitrary subset of size $s$.

        \item $F\in \cA_1$: $F = \{1\}\cup A$ for some $A\in\binom{[3,n]}{d}$ such that $|A\cap [3,m]|\ge s$. We set $B_F\subseteq A\cap [3,m]$ to be any arbitrary subset of size $s$.

        \item $F\in \cA_2$: $F = \{2\}\cup A$ for some $A\in\binom{[3,n]}{d}$ such that $|A\cap [3,m]|< s$. We set $B_F\subseteq A$ to be any subset of size $s$ such that $|B_F\cap [m+1,n]|\ge d-s+1$. This is well-defined by the assumption that $|A\cap [m+1,n]|\ge d - |A\cap [3,m]|\ge d-s+1$ and $s\ge d-s+1$.
    \end{itemize}
    Now we justify that the above constructions are indeed valid witnesses.
    \begin{itemize}
        \item $F\in \cA_{12}$: Since $\{1,2\}\subseteq F$ and $\{1,2\}\cap F'\ne \varnothing$ for all $F'\in\cF$, so we have $\{1,2\}\cap (F\cap F')\ne\varnothing$. Since $B_F\cap \{1,2\} = \varnothing$, we have $B_F$ is a valid witness for $F$.

        \item $F\in \cA_1$: Since $1\in F$ and $1\not\in B_F$, any $F'\in \cF$ satisfying $F\cap F' = B_F$ must be in $\cA_2$. However, since $|F'\cap [3,m]| < s$ and $B_F\subseteq A\cap [3,m]$, we cannot have $F'\cap F = B_F$.

        \item $F\in \cA_2$: Similarly, since $2\in F$ and $2\not\in B_F$, any $F'\in \cF$ satisfying $F\cap F' = B_F$ must be in $\cA_1$. Then we must have $\abs{F'\cap [m+1,n]}\leq d-s$. Since we have $B_F\supseteq A\cap [m+1,n]$ and $|A\cap [m+1,n]\ge d-s+1$, there must exist an element in $B_F$ that is not in $F'$, so $F\cap F'\neq B_F$.
    \end{itemize}
    This shows that $\cF$ is an $s$-witness family.
\end{proof}

\subsection{A non-star construction with arbitrarily large piercing number}\label{subsec:more_star}
The construction for \cref{thm:more-star} is similar to the one given in \cref{subsec:two_star}. In particular, when the piercing number $k=2$, the following construction recovers the one in \cref{subsec:two_star} with $m=s+2$.
\begin{proof}[Proof of \cref{thm:more-star}]
    Let $U = [n] \setminus [k] = \{k+1, \ldots, n\}$. Let $S_1, S_2, \ldots, S_{k-1} \subseteq U$ be a collection of pairwise disjoint subsets of size $s$. This can be done provided that $n \ge k + (k-1)s$.
    For $i \in [k-1]$ and $m \ge s$, let us define the families
    \[
    \mc A_i(m) \eqdef \left\{ A \in {U\choose m}\,\middle|\,S_i \subseteq A  \right\},
    \]
    and 
    \[
    \mc A_k (m) \eqdef \left\{ A \in {U\choose m}\,\middle|\, S_i \not\subseteq A ~\forall i =1,\ldots, k -1  \right\}.
    \]
   Using the assumption that $2s> d$, we know that these families are pairwise disjoint and they cover the sets in ${U \choose m}$ if $m\leq d$. 
    For a non-empty subset $I \subseteq [k]$, we define
    \[
    \mc A_I \eqdef \bigsqcup_{i \in I} \mc A_i\left(d+1- |I|\right) \subseteq {U \choose d+1-|I|}.
    \]
    Let $\mc F_I \eqdef \{I\cup A \mid A\in \mc A_I\}$ and define $\mc F \eqdef \bigsqcup_{\varnothing\neq I\subseteq [k]} \mc F_I $. We claim that $\mc F$ is an $s$-witness family of size ${n-1 \choose d}$ with piercing number $k$. First, let us compute the size of $\mc F$. Note that for each $t=1, \ldots, k$, we have
    \[
    \sum_{I \in {[k]\choose t}} |\mc A_I| = \sum_{i=1}^k {k-1 \choose t-1} |\mc A_i| = {k-1\choose t-1} {n-k\choose d+1-t},
    \]
    and so
    \[
    |\mc F| = \sum_{\varnothing\ne I\subseteq [k]}|\mc A_I|  = \sum_{t=1}^k {k-1\choose t-1} {n-k\choose d+1-t} = {n-1\choose d}.
    \]
    Next, we show that $\cF$ is indeed an $s$-witness family. We begin by defining the witness sets $B_F$ for each $F\in\cF$. Suppose that $F\in\cF_I$ with nonempty $I \subseteq [k]$, so that there exists $A \in \mc A_I$ such that $F = I\cup A$. Then we define $B_F$ in the following way:
    \begin{itemize}
        \item If $S_i \subseteq A$ for some $i \in [k-1]$, then we set $B_F = S_i$.
        \item Otherwise, let $J$ be the set of indices $j \in [k-1] \setminus I$ such that $A\cap S_j \neq\varnothing$. For every $j \in J$, fix an arbitrary element $x_j \in S_j \cap A$. Let $B_F$ be an arbitrary subset of $A \setminus \{x_j\mid j \in J\}$ of size $s$. 
    \end{itemize}
    Note that in the second case we have $|J| \le k-|I|$. Thus, we have 
    \[
    |A \setminus \{x_j\mid j \in J\}| = |A|-|J| = d+1-|I| - |J|\ge d+1-k \ge s,
    \]
    and hence it is possible to pick an $s$-subset $B_F\subseteq A \setminus \{x_j\mid j \in J\}$. Furthermore, we note that $B_F\subseteq U$ for all $F\in \cF$.

    Now, we check the $s$-witness condition. That is, we check that $F\cap F'\neq B_F$ for all $F,F'\in\cF$. Suppose that $F = I\cup A$ and $F' = I'\cup A'$ with nonempty $I, I' \subseteq [k]$, $A \in \mc A_I$, and $A' \in \mc A_{I'}$. We check the condition in the following case analysis.    
    \begin{itemize}
        \item If $I\cap I' \neq \varnothing$, then $F\cap F' \supseteq I\cap I'$. Since $I\cap I'$ is disjoint from from $B_F \subseteq U$, we have $F\cap F'\neq B_F$ in this case.

        \item If $I\cap I' = \varnothing$ and $S_i \subseteq A$ for some $i \in [k-1]$, then $i \in I$ and $B_F = S_i$. It follows that $i \not\in I'$ and hence $B_F=S_i \not\subseteq A'$.

        \item Otherwise, we have $I\cap I' = \varnothing$ and $S_i \not\subseteq A$ for all $i\in [k-1]$. It follows that $k \in I$. Since $A'\in \mc F_{I'}$ and $I' \subseteq [k-1]$, there exists $i' \in I'$ such that $S_{i'} \subseteq A'$. 
        \begin{itemize}
            \item If $A \cap S_{i'} = \varnothing$, then we have
        \[
        |F\cap F'| = |A\cap A'| = |A \cap (A'\setminus S_{i'})| \le |A'|-|S_{i'}| \le d-s < s, 
        \]
        and hence $F\cap F' \neq B_F$.
        \item Otherwise, we have $A\cap S_{i'} \neq\varnothing$. By construction there is an element $x_{i'} \in A\cap S_{i'}$ which does not lie in $B_F$. On the other hand, since $S_{i'} \subseteq A'$, we clearly have $x_{i'} \in A\cap A' \subseteq F\cap F'$. Therefore, $F\cap F' \neq B_F$. 
        \end{itemize}
    \end{itemize}
    By the case analysis above, we know that $\cF$ is an $s$-witness family.

    It remains to show that $\tau(\mc F)=k$. It is clear that $[k]$ pierces every set in $\mc F$, and hence $\tau(\mc F) \le k$. Now, assume for the sake of contradiction that there is some $(k-1)$-set $T = I \sqcup R$, where $I \subseteq [k]$ and $R \subseteq U$, that intersects every set in $\mc F$. For any element $j \in [k]\setminus I$, we know that $R$ intersects every set in $\mc A_j(d)$. For each $j\in [k-1]\setminus I$, then we must have $R\cap S_j \neq\varnothing$. Otherwise, we can find a set $A\subseteq U\setminus R$ of size $d$ such that $S_j\subseteq A$ and $A\cap R=\varnothing$. This can be done since $\abs{U\setminus R}=\abs{U}-\abs{R}\geq n-k-(k-1)\geq d$. Therefore, we know that $\abs{(I\cup R)\cap S_j}=1$ for all $j\in [k-1]$, and $k\notin I$. Thus, $R$ must intersect every set in $\mc A_k(d)$. However, we can find a set $A\subseteq U\setminus R$ of size $d$ that does not contain any $S_i$ with $i\in [k-1]$ as a subset. This is because we only need to avoid the set $R$ and also avoid one element from $S_i$ for each $i\in I$. This is doable since $\abs{U}-\abs{R}-\abs{I}=n-k-(k-1)\geq d$. 
    
    We can conclude that $\cF$ is an $s$-witness family of size $\binom{n-1}{d}$ and it has piercing number $k$. This completes the proof.
\end{proof}

\printbibliography

\end{document}